\documentclass[12pt,reqno]{amsproc}

\textwidth=465pt \evensidemargin=0pt \oddsidemargin=0pt
\marginparsep=8pt \marginparpush=8pt \textheight=650pt
\topmargin=-25pt

\setlength{\parskip}{2pt}

\usepackage{amsmath,amsthm,amssymb,wasysym}
\usepackage[inline]{enumitem}
\usepackage{caption}
\usepackage{subcaption}
\usepackage{graphicx}

\DeclareMathOperator{\RE}{Re}

\numberwithin{equation}{section}
\theoremstyle{plain}

\newtheorem{theorem}{Theorem}[section]

\newtheorem{remark}[theorem]{Remark}

\theoremstyle{definition}
\newtheorem{definition}[theorem]{Definition}

\allowdisplaybreaks

\begin{document}
\title{Radius of Starlikeness of Certain Analytic Functions}

\author[A. Sebastian]{Asha Sebastian}
\address{Department of Mathematics \\National Institute of Technology\\Tiruchirappalli-620015,  India }
\email{ashanitt18@gmail.com}

\author{V. Ravichandran}
\address{Department of Mathematics \\National Institute of Technology\\Tiruchirappalli-620015,  India }
\email{vravi68@gmail.com; ravic@nitt.edu}

\begin{abstract}
This paper  studies analytic functions $f$ defined on the open unit disk of the complex plane  for which $f/g$ and $(1+z)g/z$ are both functions with positive real part for some  analytic function $g$. We determine radius constants of these functions to  belong  to classes of strong starlike functions,  starlike functions of order $\alpha$,   parabolic starlike functions,  as well as to the classes of  starlike functions associated with lemniscate of Bernoulli, cardioid,  lune,  reverse lemniscate,  sine function,  exponential function and  a particular rational function. The  results obtained are sharp.
\end{abstract}

\subjclass[2010]{30C80,  30C45}
\keywords{Univalent functions;    convex functions;  starlike functions; subordination; radius of starlikeness}

\thanks{The first author is supported by  the institute  fellowship from NIT Tiruchirappalli.}

\maketitle
\section{Introduction}
The unit disk $\mathbb{D}$ consists of all points $z\in \mathbb{C}$ satisfying $|z|<1$ and the class  $\mathcal{A}$ consists of all analytic functions $f:\mathbb{D}\to \mathbb{C}$ normalised by  the condition $f(0) = f'(0)-1 = 0$.  Let $\mathcal{S}$ denote the  class of univalent functions  in $\mathcal{A}$. For two families $\mathcal{G}$ and $\mathcal{F} $ of $\mathcal{A}$,  the $\mathcal{G}$-radius of $\mathcal{F} $  denoted by $R_{\mathcal{G}}(\mathcal{F} )$ is the largest number $R$ such that $r^{-1}f(rz) \in \mathcal{G}$ for $ 0< r \leq R$  and for all $f \in \mathcal{F}$.   Whenever $\mathcal{G}$ is characterised by a geometric property $P$,  the number $R$ is also referred to as the radius of property $P$ for the class $\mathcal{F} $. There are various studies on radius problems; one of these studies focus on functions $f$  characterised by the ratio between $f$ and  another function $g$ belonging to particular subclasses of $\mathcal{A}$ \cite{greg, greg1, greg2, ratti, ratti1, shah, shan}. Ali \emph{et al.}\  obtained several radius  results for classes of functions $f$ satisfying one of the conditions: (i) $\operatorname{Re}(f(z)/g(z))> 0 $ where $\operatorname{Re} (g(z)/z) > 0$ or $\operatorname{Re} (g(z)/z) > 1/2$ (ii) $\left|
(f(z)/g(z))-1\right|< 1$ where $\operatorname{Re} (g(z)/z) > 0$ or $g$ is convex. These classes are closely related to the class of close-to-convex functions and for functions belonging to several related classes we study various radii for these functions to belong to several subclasses of starlike functions discussed below.

An analytic function $f$ is \emph{subordinate} to another analytic function  $g$,    written   $f \prec g$ or $f(z) \prec g(z)$ $ (z \in \mathbb{D})$,  if there exists an analytic function $w:\mathbb{D}\to \mathbb{D}$ with $w(0)=0$ satisfying $f=g\circ w$. For a  univalent function $g$,   the equivalent condition for subordination is $f(0)=g(0)$ and $f(\mathbb{D}) \subseteq g(\mathbb{D})$.   Subclasses of starlike and convex functions are characterised by the quantities $z f'(z)/f(z)$ or $1+z f''(z)/f'(z)$ lying in a convex region in the right half plane. In 1989,    Shanmugam \cite{shan1} unified the classes of starlike and convex functions using subordination and convolution: for a fixed $g\in \mathcal{A}$ and a convex function $\varphi$ with $\varphi(0)=1$, he considered the class $\mathcal{S}^*_g(\varphi)$ of all functions $f\in \mathcal{A}$ satisfying $z(f*g)'(z)/(f*g)(z)\prec \varphi(z)$. Here the function $ f*g$ denotes the \emph{convolution} (or \emph{the Hadamard product}) of two analytic functions $f(z)=\sum_{n=1}^\infty a_n z^n$ and $g(z)=\sum_{n=1}^\infty b_nz^n$ defined by
$(f*g)(z)=\sum_{n=1}^\infty a_nb_nz^n$. When $g(z)=z/(1-z)$,  we denote the class $\mathcal{S}^*_g(\varphi)$  by $\mathcal{S}^* (\varphi)$ and when $g(z)=z/(1-z)^2$,   the class $\mathcal{S}^*_g(\varphi)$  by $\mathcal{K}(\varphi)$. In 1992,  Ma and Minda \cite{ma} gave a unified treatment of growth,  distortion and covering theorems of functions in the classes $\mathcal{S}^*(\varphi)$ and $\mathcal{K}(\varphi)$. Note that $\mathcal{S}^{*}[A,  B]:= \mathcal{S}^{*}((1+Az)/(1+Bz)), \quad -1 \leq B < A \leq 1$  is the class of Janowski starlike functions and $\mathcal{K}[A,  B]:= \mathcal{K}((1+Az)/(1+Bz))$ is the class of Janowski convex functions. For  $0 \leq \alpha < 1$, the classes $  \mathcal{S}^{*}(\alpha)=  \mathcal{S}^{*}[1-2\alpha, -1]$ and $\mathcal{K}(\alpha)= \mathcal{K}[1-2\alpha, -1]$ are the familiar classes of  starlike functions of order $\alpha$ and convex functions of order $\alpha$ respectively. These classes were studied extensively in \cite{dur, good, jan1, jan2}. The class $\mathcal{UCV}$ of \emph{uniformly convex functions} consists of all functions $f \in \mathcal{A}$ that maps every circular arc contained in $\mathbb{D}$ with centre $\xi \in \mathbb{D}$ onto a convex arc. R\o nning \cite{ronn} and Ma and Minda \cite{ma1} independently proved that a function $f \in \mathcal{UCV}$ if
	\[1+\frac{z f''(z)}{f'(z)} \prec 1+ \frac{2}{\pi^{2}}\left(\log \frac{1+\sqrt{z}}{1-\sqrt{z}}\right)^{2} :=\varphi_{PAR}(z).\]
The image $\varphi_{PAR}(\mathbb{D})=\{w = u+\operatorname{i}v: v^{2} < 2 u-1\} = \{w: \operatorname{Re}w > \left|w-1 \right| \}$ is a parabolic region and the functions in  the class $\mathcal{S}_{p}:=\mathcal{S}^*(\varphi_{PAR})$  are known as \emph{parabolic starlike functions}. These functions are studied by several authors \cite{ali2,ganga, ma1, ravi, shan}. For $0 < \gamma \leq 1$,  the class $\mathcal{S}^*((1+z/1-z)^{\gamma})$  is the class $\mathcal{S}_{\gamma}^{*}$ of \emph{strongly starlike functions of order} $\gamma$ and note that the function $f\in\mathcal{S}_{\gamma}^{*}$ if    $\left|\operatorname{arg} (zf'(z)/f(z))\right| \leq \pi \gamma/2$.

In 1996,  Sok\'{o}\l{} and Stankiewicz \cite{sokol} and several authors \cite{ali, ali1, aouf, pap} studied the class $\mathcal{S}_{L}^{*} = \mathcal{S}^{*}(\sqrt{1+z})$.  Geometrically,  the class $\mathcal{S}_{L}^{*}$ represents a collection of functions $f \in \mathcal{A}$ such that $zf'(z)/f(z)$ lies in the region bounded by the right half of the lemniscate of Bernoulli $\left|w^{2}-1\right|= 1$ or  $ (u^{2}+v^{2})^{2}-2(u^{2}-v^{2})=0$.   Motivated by these,  Mendiratta \emph{et al.}\ \cite{rajni1, rajni} introduced and studied the sub classes of starlike functions
\[\mathcal{S}_{e}^{*}= \mathcal{S}^{*}(e^{z}) \quad\text{and} \quad \mathcal{S}_{RL}^{*}=\mathcal{S}^{*}\left(\sqrt{2}-(\sqrt{2}-1)\sqrt{\frac{1-z}{1+2(\sqrt{2}-1)z}}  \right).\] Geometrically, the function $f \in \mathcal{S}_{RL}^{*} $ provided $zf'(z)/f(z)$ lies in the interior of the left half of the shifted lemniscate of Bernoulli given by $\left|(w-\sqrt{2})^{2}-1\right| < 1$.   Similarly, Sharma \emph{et al.}\ \cite{kanika} studied various properties of the class $\mathcal{S}_{c}^{*}= \mathcal{S}^{*}(\varphi_{c}(z))$ where $\varphi_{c}(z)=1+(4/3)z+(2/3)z^{2}$.   Essentially, a function $f \in \mathcal{S}_{c}^{*} $ if $zf'(z)/f(z)$ lies in the region bounded by the cardioid $\Omega_{c}:=\{x+\operatorname{i}y:(9x^{2}+9y^{2}-18x+5)^{2}-16(9x^{2}+9y^{2}-6x+1)=0\}$.   In 2015,  Raina and Sok\'{o}\l{} \cite{raina} considered the class $\mathcal{S}_{\leftmoon}^{*} = \mathcal{S}^{*}(h)$ where $h(z) = z + \sqrt{1 + z^{2}}$.     They proved that $f \in \mathcal{S}_{\leftmoon}^{*}$ iff $zf'(z)/f(z)$ belongs to the lune shaped region $\mathcal{R}:=\{w \in \mathbb{C}: \left|w^{2}-1\right| < 2 |w|\}$.   Several other properties of $\mathcal{S}_{\leftmoon}^{*}$ were discussed by Gandhi and Ravichandran \cite{gandhi}. Kumar and Ravichandran \cite{sushil} considered the class $\mathcal{S}_{R}^{*}= \mathcal{S}^{*}(\psi)$ where $\psi(z)=1+(zk+z^2/(k^2-kz)),  \quad k=\sqrt{2}+1$.   Cho \emph{et al.}\  \cite{cho} in a similar fashion defined the class $\mathcal{S}_{sin}^{*}= \mathcal{S}^{*}(1+ \sin z)$. Recently,  several interesting subclasses of  starlike functions were studied when $\varphi$ is related to Booth lemniscate \cite{booth} and  Bell numbers \cite{hm}.

Kotsur \cite{kotsur1} investigated the class of functions $f \in \mathcal{A}$ such that $\operatorname{Re} (zf'(z))'/g'(z)>0$ where $g$ is starlike. Similar studies on radii of starlikeness and convexity of certain close to convex functions can be found in \cite{kotsur2, kotsur3, kotsur4, kotsur5}. Kowalczyk and Lecko \cite{kowa1, kowa2} introduced polynomial close to convex functions and determined radii of selected classes in them. For further related results, see \cite{kowa3, kowa4, lecko1}. Pranav Kumar and Vasudevarao \cite{vasudevarao} estimated the logarithmic coefficients of certain close to convex functions where $\operatorname{Re} (1-z)f'(z)>0$,  $\operatorname{Re}(1-z^2)f'(z)>0$ and $\operatorname{Re} (1-z+z^2)f'(z)>0$. Recently in 2019, Lecko and Sim \cite{lecko2} considered the starlike functions $z/(1-z^2)$, $z/(1-z)^2$ and investigated the functions satisfying $\operatorname{Re} (1-z^2)f(z)/z>0$ and  $\operatorname{Re} (1-z)^2 f(z)/z>0$. They have also determined certain sharp coefficient estimates.

Motivated by these studies,  we define certain classes of functions $f \in \mathcal{A}$ in the open unit disc $\mathbb{D}$ characterised by its ratio with a certain function $g$. The classes we discuss here consist of functions $f \in \mathcal{A}$ satisfying the following conditions: (i) $\operatorname{Re}f(z)/g(z)>0$ where $\operatorname{Re}(1+z)g(z)/z>0$ (ii) $\left|f(z)/g(z) - 1\right|>0$ where $\operatorname{Re}(1+z)g(z)/z>0$ (iii) $\operatorname{Re}(1+z)f(z)/z>0$ (iv) $\operatorname{Re}(1+z)^2f(z)/z>0$. We compute radius constants of the above functions for several interesting subclasses of $\mathcal{A}$ like starlike functions of order $\alpha$, parabolic starlike functions, starlike functions associated with lemniscate of Bernoulli, exponential function, cardioid, sine function, lune, a particular rational function, reverse lemniscate and strong starlike functions. The main technique involved here in finding the radius for the classes of functions is to determine the disk that contains the image of $\mathbb{D}$ by the mapping $zf'(z)/f(z)$.

\section{Radius Problems}
Let $\mathcal{P}$ be the class of all analytic functions $p:\mathbb{D}\to\mathbb{C}$ with $p(0)=1$ and $\RE p(z)>0$ for all $z\in \mathbb{D}$. This class is known as the class of functions with positive real part or the class of  Carath\'eodory functions.  This class is used in characterization of several well-studied classes of univalent functions. Our first result concerns the class  $\mathcal{F}_{1}$   of functions $f \in \mathcal{A}$ satisfying   $\ f/g\in \mathcal{P} $ for some $g \in \mathcal{A}$ with $ (1+z)g(z)/z \in \mathcal{P} $. The functions $ f_1,  g_1: \mathbb{D} \longrightarrow \mathbb{C}$ defined by
\begin{equation}\label{extremal}
f_1(z)=\frac{z(1-z)^{2}}{(1+z)^{3}}  \quad\text{and}\quad  g_1(z)=\frac{z(1-z)}{(1+z)^{2}}
\end{equation}
satisfy
\[\operatorname{Re}\frac{f_1(z)}{g_1(z)}=\operatorname{Re}\frac{(1+z)g_1(z)}{z}
=\operatorname{Re}\frac{1-z}{1+z}>0,\] and hence the function $f_1\in \mathcal{F}_{1}$. This proves that the class $ \mathcal{F}_{1}$ is non-empty. This function $f_1$ is the extremal function for the radius problems that we consider. As
\[ f_1(z)=z-5 z^2+13 z^3-25 z^4+\dotsc,\]
the functions in  $ \mathcal{F}_{1}$ are not necessarily  univalent.
Since
\[ f_1'(z)=\frac{(1-z)(1-5z)}{(1+z)^4},\]  we have $f_1'(1/5)=0$ and
it follows, by the first part of the following theorem,  that the radius of univalence of the functions in class $\mathcal{F}_{1}$ is 1/5.

\begin{theorem}
For the class $\mathcal{F}_{1}$,  the following results hold:
\begin{enumerate}[label=(\roman*)]
\item The $\mathcal{S}^{*}(\alpha)$ radius
$R_{\mathcal{S}^{*}(\alpha)}=2(1-\alpha)/\left(5+\sqrt{25-(4\alpha(1-\alpha))}\right)$$, \quad 0 \leq \alpha< 1$.	
	
\item The $\mathcal{S}_{L}^{*}$ radius
$R_{\mathcal{S}_{L}^{*}}= (2\sqrt{2}-2)/ \left(5+\sqrt{33-4\sqrt{2}}\right)  \approx 0.0809$.

\item The $\mathcal{S}_{p}$ radius
$R_{\mathcal{S}_{p}}= 5-2\sqrt{6} \approx 0.1010$.

\item The $\mathcal{S}_{e}^{*}$ radius
$R_{\mathcal{S}_{e}^{*}}= (2e-2)/\left(5e+\sqrt{25e^{2}+4(1-e) }\right)\approx 0.1276$.

\item The $\mathcal{S}_{c}^{*}$ radius
$R_{\mathcal{S}_{c}^{*}}= (15-\sqrt{217})/2 \approx 0.1345$.

\item The $\mathcal{S}_{sin}^{*}$ radius
$R_{\mathcal{S}_{sin}^{*}}= 2\sin 1 /\left(5+\sqrt{25+4 \sin 1 (1+ \sin 1)}\right)\approx 0.1589$.

\item The $\mathcal{S}_{\leftmoon}^{*}$ radius
$R_{\mathcal{S}_{\leftmoon}^{*}}= (4-2\sqrt{2})/\left(5+\sqrt{41-12\sqrt{2}}\right) \approx 0.1183$.

\item The $\mathcal{S}_{R}^{*}$ radius
$R_{\mathcal{S}_{R}^{*}}= (6-4\sqrt{2})/\left(5+\sqrt{81-40\sqrt{2}}\right) \approx 0.0342$.

\item The $\mathcal{S}_{RL}^{*}$ radius
$R_{\mathcal{S}_{RL}^{*}}$ is the root $(\approx 0.0566)$ in $[0,1]$ of the equation
\begin{align*}
25r^{2}+ (r^{2}-1)^2 +(r^2-1)\sqrt{(r^2+\sqrt{2})(2-\sqrt{2}-r^2)} & \\ - (1+ \sqrt{2}(r^2-1))^2  & = 0.
\end{align*}

\item The $\mathcal{S}_\gamma^{*}$ radius of strong starlikeness $R_{\mathcal{S}_\gamma^{*}} \geq \sin(\pi\gamma/2)/5,  \quad 0 < \gamma \leq 1$.

\end{enumerate}
\end{theorem}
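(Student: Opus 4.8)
The plan is to produce a single disk, centered on the positive real axis, that contains the image of $\{|z|\le r\}$ under $zf'(z)/f(z)$, and then to intersect that disk with each of the Ma--Minda regions. Let $f\in\mathcal{F}_1$ and choose $g$ with $p_1:=f/g\in\mathcal{P}$ and $p_2:=(1+z)g(z)/z\in\mathcal{P}$. Then $f(z)=zp_1(z)p_2(z)/(1+z)$, and logarithmic differentiation gives
\[
\frac{zf'(z)}{f(z)}=1+\frac{zp_1'(z)}{p_1(z)}+\frac{zp_2'(z)}{p_2(z)}-\frac{z}{1+z}.
\]
For $p\in\mathcal{P}$ and $|z|=r$ one has the sharp estimate $\left|zp'(z)/p(z)\right|\le 2r/(1-r^2)$, so each of the two middle terms lies in the disk centered at $0$ of radius $2r/(1-r^2)$; moreover $z\mapsto -z/(1+z)$ maps $|z|\le r$ onto the disk centered at $r^2/(1-r^2)$ of radius $r/(1-r^2)$. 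Adding the four contributions (a Minkowski sum of disks, so centers add and radii add) yields the master estimate
\[
\left|\frac{zf'(z)}{f(z)}-\frac{1}{1-r^2}\right|\le\frac{5r}{1-r^2},\qquad |z|\le r,
\]
where the constant $5=2+2+1$ is forced by the two Carath\'eodory terms and the M\"obius term.

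With the master disk in hand, write $c_r=1/(1-r^2)$ and $\rho_r=5r/(1-r^2)$, and let $D_r=\{w:\left|w-c_r\right|\le\rho_r\}$. Each radius then reduces to the largest $r$ for which $D_r$ lies in the relevant region. For $\mathcal{S}^{*}(\alpha)$ the binding condition is on the left-most point, $c_r-\rho_r\ge\alpha$, i.e.\ $\alpha r^2-5r+(1-\alpha)\ge 0$, whose smallest positive root simplifies after rationalization to $2(1-\alpha)/\!\left(5+\sqrt{25-4\alpha(1-\alpha)}\right)$, as claimed in (i). For the remaining classes I would invoke the known characterizations of when a disk centered on the positive real axis is contained in the lemniscate $\left|w^2-1\right|<1$ for (ii), the parabola $\operatorname{Re}w>\left|w-1\right|$ for (iii), the image of $e^z$ for (iv), the cardioid $\Omega_c$ for (v), $1+\sin z$ for (vi), the lune $\left|w^2-1\right|<2|w|$ for (vii), the rational region $\psi(\mathbb{D})$ for (viii), and the reverse-lemniscate region for (ix). Each containment is a one-parameter inequality in $r$; for the parabola, the binding constraint $\rho_r\le c_r-\tfrac12$ becomes $r^2-10r+1\ge 0$, giving $R_{\mathcal{S}_p}=5-2\sqrt{6}$, and the other closed forms follow the same pattern, the reverse lemniscate producing the implicit equation recorded in (ix).

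Sharpness comes from the extremal pair in \eqref{extremal}. A direct computation gives
\[
\frac{zf_1'(z)}{f_1(z)}=\frac{1-5z}{1-z^2},
\]
so at $z=\pm r$ this equals $(1\mp 5r)/(1-r^2)$; thus $f_1$ attains both ends of the real diameter of $D_r$, so $D_r$ is the smallest disk containing the image. For each class the value of $r$ at which $D_r$ first touches the boundary of the region is therefore realized by $f_1$, which makes the radii in (i)--(ix) best possible. The strong-starlikeness estimate (x) is of a different nature: requiring $D_r$ to sit inside the sector $\left|\arg w\right|\le\pi\gamma/2$ gives $\rho_r\le c_r\sin(\pi\gamma/2)$, i.e.\ $r\le\sin(\pi\gamma/2)/5$, a sufficient but not sharp condition, whence only the inequality $R_{\mathcal{S}_\gamma^{*}}\ge\sin(\pi\gamma/2)/5$.

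I expect the main obstacle to be the geometry for the non-circular boundaries --- the cardioid (v), the lune (vii) and especially the reverse lemniscate (ix) --- where containment of $D_r$ is a constrained tangency problem rather than a single real-part inequality. The crux in each of those cases is to locate the internal point of tangency of $D_r$ with the boundary curve and to verify that it coincides with the image of $z=r$ (or $z=-r$) under $zf_1'/f_1$; this coincidence is exactly what upgrades the containment bound to a sharp radius, and it is where the bulk of the routine but delicate computation will lie.
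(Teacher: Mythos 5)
Your proposal follows essentially the same route as the paper: the same factorization $f=zp_1p_2/(1+z)$, the same master disk $\left|zf'/f-1/(1-r^2)\right|\le 5r/(1-r^2)$ obtained from the Carath\'eodory logarithmic-derivative bound plus the M\"obius term, the same reduction of each case to a disk-containment lemma for the respective region, and sharpness via $zf_1'/f_1=(1-5z)/(1-z^2)$ attaining the real endpoints of the disk. The only cosmetic difference is that for the lemniscate the paper argues via $\left|w-1\right|\le\sqrt{2}-1$ and $\left|w+1\right|\le\sqrt{2}+1$ rather than a containment lemma, but this yields the same equation and the same sharp radius.
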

\begin{proof}

 Let the function $ f \in \mathcal{F}_{1}$. Let the function $ g: \mathbb{D} \longrightarrow \mathbb{C}$ be chosen such that
\begin {equation} \label{g}
\operatorname{Re}\frac{f(z)}{g(z)}>0 \quad \text{and}\quad
  \operatorname{Re}\left (\frac{1+z}{z}g(z)\right )>0  \quad (z \in \mathbb{D}).
\end{equation}
Define the functions $p_{1}, p_{2}:\mathbb{D} \longrightarrow \mathbb{C} $ by
\begin{equation}\label{h}
p_{1}(z)=\frac{1+z}{z}g(z)\quad \text{and} \quad p_{2}(z)= \frac{f(z)}{g(z)}.
\end{equation}
By (\ref{g}) and (\ref{h}), we have $p_{1}, p_{2} \in \mathcal{P}$
and $f(z)= z p_{1}(z) p_{2}(z)/(1+z)$.
Then,  a calculation involving logarithmic derivative of the function $f$ shows that
\begin{equation} \label{main1}
\frac{zf'(z)}{f(z)}=\frac{z p_{1}^{'}(z)}{p_{1}(z)}+\frac{z p_{2}^{'}(z)}{p_{2}(z)}+\frac{1}{1+z}.
\end{equation}
The bilinear transformation $1/(1+z)$ maps the disk $\left|z\right| \leq r$ onto the disk
\begin{equation}\label{disk}
 \left | \frac{1}{1+z}-\frac{1}{1-r^{2}} \right | \leq \frac{r}{1-r^{2}}.
\end{equation}
For $p \in \mathcal{P}(\alpha):=\{p \in \mathcal{P}| \operatorname{Re} p > \alpha \}$, by  \cite[Lemma 2]{shah}, we have
\begin{equation}\label{shah}
	\left | \frac{z p'(z)}{p(z)} \right | \leq \frac{2(1-\alpha)r}{(1-r)(1+(1-2\alpha)r)} \quad  (\left| z \right| \leq r).
\end{equation}
Using (\ref{disk}) and (\ref{shah}), it follows from \eqref{main1} that the function $f$ maps the disk $\left|z\right| \leq r$ onto the disk
\begin{equation} \label{star1}
\left | \frac{zf'(z)}{f(z)} - \frac{1}{1-r^{2}}  \right |  \leq \frac{5r}{1-r^{2}}.
\end{equation}
The classes we discuss here are all subclasses of starlike functions. These classes are described by the quantity $zf'(z)/f(z)$ lying in some region in the right half plane. The radius problems are solved by finding $r$ such that the disk in \eqref{star1} is contained in the corresponding regions. By \eqref{star1}, we have
\begin{equation}\label{starlike}
	\operatorname{Re} \frac{zf'(z)}{f(z)} \geq \frac{1-5r}{1-r^{2}} \geq 0,\quad (r \leq 1/5),
\end{equation}
then the function $ f \in \mathcal{F}_{1}$ is starlike in $\left|z\right| \leq 1/5$.  Hence all the radii that we estimate will be less than $1/5$. Note that,  for $0 < r \leq 1/5$,  the centre of disk in (\ref{star1}) lies in the interval $[1,  25/24] \approx [1, 1.0416]$.

\begin{enumerate}[label=(\roman*),  leftmargin=12pt]
\item The number $r=R_{\mathcal{S}^{*}(\alpha)} $ is the root of
$  \alpha r^{2}-5r+1-\alpha=0$ in $[0,1]$ and hence, for $0<r\leq R_{\mathcal{S}^{*}(\alpha)}$, it follows from (\ref{starlike}) that
\[ 	\operatorname{Re} \frac{zf'(z)}{f(z)} \geq   \frac{1-5r}{1-r^{2}}\geq \alpha.\]
For the functions  $f_{1} \in \mathcal{F}_{1}$  given by \eqref{extremal} we have
\[\frac{zf_{1}^{'}(z)}{f_{1}(z)}=\frac{1-5z}{1-z^{2}}=\frac{1-5r}{1-r^{2}}=\alpha, \quad (z=r= R_{\mathcal{S}^{*}(\alpha)})\]
and this shows that the radius is sharp.
	
\item	  It   follows from (\ref{star1}) that
\begin{equation} \label{estimate}
\left | \frac{zf'(z)}{f(z)} - 1  \right |  \leq \left | \frac{zf'(z)}{f(z)} - \frac{1}{1-r^{2}}  \right | + \frac{r^2}{1-r^2} \leq \frac{5r+ r^{2}}{1-r^{2}}.
\end{equation}The number $r= R_{\mathcal{S}_{L}^{*}}$ is the root in $[0,1]$ of $(5r+r^2)=(\sqrt{2}-1)(1-r^2)$ and for $0 < r \leq R_{\mathcal{S}_{L}^{*}}$,  we have
\begin{equation} \label{lemniradius}
	\frac{5r+r^{2}}{1-r^{2}} \leq \sqrt{2}-1.
\end{equation}
Therefore,  by (\ref{estimate}) and (\ref{lemniradius}), for $0 < r \leq R_{\mathcal{S}_{L}^{*}}$, we have
\begin{equation}\label{connect}
	\left | \frac{zf'(z)}{f(z)} - 1  \right |  \leq \frac{5r+ r^{2}}{1-r^{2}} \leq \sqrt{2}-1.
\end{equation} For $0 < r \leq R_{\mathcal{S}_{L}^{*}}$,  using triangle inequality and \eqref{connect}, we have
\begin{equation}\label{tconnect}
	\left | \frac{zf'(z)}{f(z)} + 1  \right |\leq 2+  \left | \frac{zf'(z)}{f(z)} - 1  \right | \leq \sqrt{2}+1
\end{equation}
 and hence by \eqref{connect} and \eqref{tconnect},
\[ \left|\left(\frac{zf'(z)}{f(z)}\right)^{2} - 1\right|\leq \left|\frac{zf'(z)}{f(z)} + 1\right|\left|\frac{zf'(z)}{f(z)} - 1\right|\leq (\sqrt{2}+1)(\sqrt{2}-1)=1.\]
%
%
%
The number  $\rho=  R_{\mathcal{S}_{L}^{*}}$ satisfies $(1+5\rho)/(1-\rho^{2})= \sqrt{2}$. Using this, we see that the function $f_{1}$ defined in (\ref{extremal}) satisfies
\[ \left| \left(\frac{zf_{1}'(z)}{f_{1}(z)}\right)^{2} - 1    \right|=   \left| \left(\frac{1-5z}{1-z^{2}}\right )^{2}-1\right|= \left| \left(\frac{1+5\rho}{1-\rho^{2}}\right)^{2} - 1    \right|=1, \quad (z:= -\rho= -R_{\mathcal{S}_{L}^{*}}).\]
This shows that the radius is sharp (See Figure \ref{fig:pb1}.(\subref{fig1:pb1a})).

\item
Let $\Omega_{PAR}=\{w=u+\operatorname{i}v:v^{2}<2u-1\}=\{w: \operatorname{Re}w > \left|w-1\right| \}$.  Note that $\Omega_{PAR}$ is the interior of a parabola in the right half plane which is symmetric about real axis and has vertex at $\left(1/2,  0\right)$.  By Lemma \cite[pp.321]{shan}, for $1/2 < a < 3/2$, we have
\begin{equation}\label{parabolic}
\{w \in \mathbb{C}: \left | w-a \right |< a-1/2  \}\subseteq \Omega_{PAR}.
\end{equation}
If $0 < r \leq R_{\mathcal{S}_{p}}$,  then $a= 1/(1-r^{2}) \leq 3/2$ and
\[\frac{5r}{1-r^{2}} \leq \frac{1}{1-r^{2}}-\frac{1}{2}.\] Thus,  by \eqref{parabolic},  we see that the disk in (\ref{star1}) lies inside the parabolic region $\Omega_{PAR}$.  Sharpness follows for the function $f_{1}$ defined in (\ref{extremal}) (See Figure \ref{fig:pb1}.(\subref{fig1:pb1b})).
At $z:=\rho= R_{\mathcal{S}_{p}}$, we have
\[\operatorname{Re}\frac{zf'(z)}{f(z)}=\frac{1-5\rho}{1-\rho^2}=\frac{5\rho-\rho^2}{1-\rho^2}=\left|\frac{\rho^2-5\rho}{1-\rho^2}\right|=\left|\frac{zf'(z)}{f(z)}-1\right|.\]

\begin{figure}
	\centering
	\begin{subfigure}{.5\textwidth}
		\centering
		\includegraphics[height=3cm,keepaspectratio]{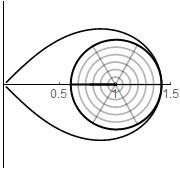}
		\caption{Sharpness of class $\mathcal{S}_{L}^{*}$}
		\label{fig1:pb1a}
	\end{subfigure}%
	\begin{subfigure}{.5\textwidth}
		\centering
		\includegraphics[height=3cm,keepaspectratio]{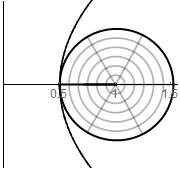}
		\caption{Sharpness of class $\mathcal{S}_p$}
		\label{fig1:pb1b}
	\end{subfigure}
	\caption{Sharpness of starlike functions associated with lemniscate and parabolic starlike functions.}
	\label{fig:pb1}
\end{figure}

\item For  $   e^{-1} \leq a \leq (\ e+\ e^{-1})/2$, by \cite[Lemma 2.2]{rajni}, we have
\begin{equation}\label{exponential}
	\{w \in \mathbb{C}: \left | w-a \right |< a-\ e^{-1}  \}\subseteq \{w \in \mathbb{C}: \left | \log w\right |< 1 \}=: \Omega_{e}.
\end{equation}  For $0 < r \leq R_{\mathcal{S}_{e}^{*}}$, we have
  $1/e \leq  a= 1/(1-r^{2}) \leq (\ e+\ e^{-1})/2$ and
	\[\frac{5r}{1-r^{2}} \leq \frac{1}{1-r^{2}}-\frac{1}{e}.\]
By  \eqref{exponential}, the disk in \eqref{star1}  lies inside
$ \Omega_{e} $   for $0 < r \leq R_{\mathcal{S}_{e}^{*}}$ proving   that  the $\mathcal{S}_{e}^{*}$ radius for the class $\mathcal{F}_{1}$ is $R_{\mathcal{S}_{e}^{*}}$.
The sharpness follows for the function $f_{1}$ defined in (\ref{extremal}) (See Figure \ref{fig:pb1a}.(\subref{fig1:sub3}).
Indeed at $z:=\rho= R_{\mathcal{S}_{e}^{*}}$, we have  \[ \left| \log \frac{zf_{1}'(z)}{f_{1}(z)} \right|= \left| \log \frac{1-5 \rho}{1-\rho^{2}}\right|=1.\]

\item 	For $1/3 < a \leq 5/3$, by \cite[ Lemma 2.5]{kanika}, we have
\begin{equation}\label{cardiod}
	\{w \in \mathbb{C}: \left | w-a \right |<(3a-1)/3 \}\subseteq \Omega_{c}
\end{equation}
where $\Omega_{c}$ is the region bounded by the cardioid $\{x+iy: (9x^{2}+9y^{2}-18x+5)^{2}-16(9x^{2}+9y^{2}-6x+1)=0\}$.
If $0 < r \leq R_{\mathcal{S}_{c}^{*}}$, then
\[\frac{5r}{1-r^{2}} \leq  \frac{1}{1-r^{2}}-\frac{1}{3} .\]  By (\ref{cardiod}), we see that the disk in (\ref{star1}) lies inside $\Omega_{c}$,  if $0 < r \leq R_{\mathcal{S}_{c}^{*}}$. The  result is sharp for the function $f_1$ defined in (\ref{extremal}). At $z:=\rho= R_{\mathcal{S}_{c}^{*}}$,
\[\left|\frac{zf'(z)}{f(z)}\right|=\left|\frac{1-5\rho}{1-\rho^2}\right|=\frac{1}{3}=\Omega_{c}(-1)\in\partial_c(\mathbb{D}).\]

\item For $\left|a-1\right| \leq \sin 1$, by \cite[Lemma 3.3]{cho}, we have
\begin{equation}\label{sine}
	\{w \in \mathbb{C}: \left | w-a \right |< \sin 1-\left|a-1\right|\}\subseteq \Omega_{s}
\end{equation}
where $\Omega_{s}:=q_{0}(\mathbb{D})$ is the image of the unit disk $\mathbb{D}$ under the mappings $q_{0}(z)=1+\sin z$. It is evident from (\ref{star1}) and (\ref{sine}) that
\[\frac{5r}{1-r^{2}} \leq \sin 1-\frac{r^{2}}{1-r^{2}} .\]
Hence the disk in (\ref{star1}) lies inside $\Omega_{s}$ provided $0 < r \leq R_{\mathcal{S}_{sin}^{*}}$. For the function $f_{1}$ defined in (\ref{extremal}) (See Figure \ref{fig:pb1a}.(\subref{fig1:sub4})), at $z:=-\rho= -R_{\mathcal{S}_{sin}^{*}}$,
\[\left|\frac{zf'(z)}{f(z)}\right|=\left|\frac{1+5\rho}{1-\rho^2}\right|=1+\sin 1=q_{0}(1)\in\partial\Omega_{s}(\mathbb{D}).\]
\begin{figure}
	\centering
	\begin{subfigure}{.5\textwidth}
		\centering
		\includegraphics[height=3cm,keepaspectratio]{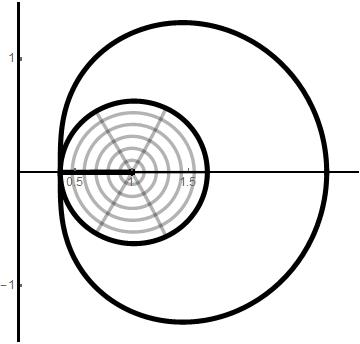}
		\caption{Sharpness of class $\mathcal{S}_{e}^{*}$}
		\label{fig1:sub3}
	\end{subfigure}%
	\begin{subfigure}{.5\textwidth}
		\centering
		\includegraphics[height=3cm,keepaspectratio]{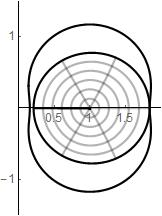}
		\caption{Sharpness of class $\mathcal{S}_{sin}^{*}$}
		\label{fig1:sub4}
	\end{subfigure}
	\caption{Sharpness for starlike functions associated with exponential and sine functions.}
	\label{fig:pb1a}
\end{figure}
\item
Let $\mathcal{S}_{\leftmoon}^{*}=\{f \in \mathcal{S}^{*}:\left|(zf'(z)/f(z))^{2} - 1\right| < 2 \left|zf'(z)/f(z)\right|\}$.
In 2015,  Sok\'{o}\l{} \cite{raina} proved that if $f\in \mathcal{S}_{\leftmoon}^{*}$ then the function $f$ is a starlike function and $\left|zf'(z)/f(z) - 1\right| < \sqrt{2}$ and $\left|zf'(z)/f(z) + 1\right| > \sqrt{2}$.  Interpreting these conditions geometrically, for $f\in \mathcal{S}_{\leftmoon}^{*}$ we see that $w=zf'(z)/f(z)$ lies in the right half plane,  inside the intersection of disks $\left\{w:|w - 1| < \sqrt{2}\right\}$ and $\left\{w:|w + 1| < \sqrt{2}\right\}$.
According to \cite[Lemma 2.1]{gandhi}, we have
\begin{equation}\label{lune}
	\{w \in \mathbb{C}: \left | w-a \right |< 1-|\sqrt{2}-a |\}\subseteq \{w \in \mathbb{C}: \left | w^{2}-1 \right |< 2\left|w\right| \}
\end{equation} or$, (1-\sqrt{2})r^{2}+5r+\sqrt{2}-2 \leq 0$.
If $0 < r \leq R_{\mathcal{S}_{\leftmoon}^{*}}$, then
\[\frac{5r-1}{1-r^{2}} \leq 1-\sqrt{2}.\]
Thus, by \eqref{lune}, the disk in \eqref{star1} lies inside $\{w \in \mathbb{C}: \left | w^{2}-1 \right |< 2\left|w\right| \}$ and hence  $f \in \mathcal{S}_{\leftmoon}^{*}$. The sharpness follows from the functions defined in (\ref{extremal}).
At $z:=\rho= R_{\mathcal{S}_{\leftmoon}^{*}}$, we have
\[ \left| \left(\frac{zf_{1}'(z)}{f_{1}(z)}\right)^{2} - 1    \right|= \left| \left(\frac{1-5\rho}{1-\rho^{2}}\right)^{2} - 1    \right|=2\left| \frac{1-5\rho}{1-\rho^{2}} \right|=2\left| \frac{zf_{1}'(z)}{f_{1}(z)} \right|.\]

\item For $2(\sqrt{2}-1) < a \leq \sqrt{2}$, by \cite[ Lemma 2.2]{sushil},
\begin{equation}\label{rational}
	\{w \in \mathbb{C}: \left | w-a \right |< a-2(\sqrt{2}-1)\}\subseteq \psi(\mathbb{D})
\end{equation}
where $\psi$ is given by $\psi(z)=1+\left(z^2k+z^2/(k^2-kz)\right),\quad k=\sqrt{2}+1$.
 If $0 < r \leq R_{\mathcal{S}_{R}^{*}}$, $2(\sqrt{2}-1) < a= 1/(1-r^{2}) \leq \sqrt{2}$ and
 \[\frac{5r-1}{1-r^{2}} \leq 2-2\sqrt{2},  \quad (0 < r \leq R_{\mathcal{S}_{R}^{*}}).\] Then, by \eqref{rational}, the disk in (\ref{star1}) lies inside $\psi({\mathbb{D}})$. The result is sharp for the function defined in (\ref{extremal}). At $z:=\rho= R_{\mathcal{S}_{R}^{*}}$,
 \[\left|\frac{zf'(z)}{f(z)}\right|=\left|\frac{1-5\rho}{1-\rho^2}\right|=2(\sqrt{2}-1)=\psi(1)\in\partial \psi(\mathbb{D}).\]

 \item For $\sqrt{2}/3 \leq a < \sqrt{2}$, by \cite[Lemma 3.2]{rajni1}, we have
 \begin{equation}\label{reverse}
 	\{w \in \mathbb{C}: \left | w-a \right |< r_{RL}\}\subseteq \{w \in \mathbb{C}:  | (w-\sqrt{2})^{2}-1  |< 1 \},
 \end{equation}
 provided $r_{RL}= \left(\left(1-\left(\sqrt{2}-a\right)^{2}\right)^{1/2}- \left(1-\left(\sqrt{2}-a\right)^{2}\right)\right)^{1/2}$.
 If $0 < r \leq R_{\mathcal{S}_{RL}^{*}}$, then it follows that  $ \sqrt{2}/3 \leq a= 1/(1-r^{2}) < \sqrt{2}$, and
 \begin{align*}
 	 25r^{2}- (1-r^{2})\sqrt{(1-r^{2})^{2}-((\sqrt{2}-\sqrt{2}r^{2})-1)^{2}} & \\   {} +         (1-r^{2})^{2}-((\sqrt{2}-\sqrt{2}r^{2})-1)^{2} & \leq 0.
 \end{align*}
 Then, by \eqref{reverse}, the disk in \eqref{star1} lies inside the region $\{w: | (w-\sqrt{2})^{2}-1 |< 1\}$. The result is sharp for the function defined in (\ref{extremal}).

 \item
 If a function $f(z)$ is strongly starlike of order $\gamma,  0< \gamma \leq 1$,  then $\left|\operatorname{arg}\{zf'(z)/f(z)\}\right| \leq \pi\gamma/2 $.  In other words the values of  $\left|zf'(z)/f(z) \right|$ are in the sector $\left|y\right|\leq \tan(\pi\gamma/2)x$, $x \geq 0$.  In 1997,  Gangadharan \emph{et al.}\ \cite[Lemma 3.1]{ganga} proved that
 \begin{equation}\label{strongstar}
 	\{w \in \mathbb{C}: \left | w-a \right|<   a  \sin(\pi\gamma/2) \} \subseteq\{w: \left| \arg w \right| \leq (\pi\gamma)/2\},  \quad 0<\gamma \leq 1.
 \end{equation}
 If $0 < r \leq R_{\mathcal{S}_\gamma^{*}}$, $0 < \gamma \leq 1$, then
$ r\leq (\sin(\pi\gamma)/2)/5$.
 It is evident from \eqref{strongstar} that the disk in (\ref{star1}) is contained in the sector $\left| \arg w \right| \leq (\pi\gamma)/2,  0<\gamma \leq 1$ if $0 < r \leq R_{\mathcal{S}_\gamma^{*}}$.\qedhere
\end{enumerate}
\end{proof}

\begin{remark}(i) The class \[ \mathcal{S}^{*}(c,d)=\left\{f\in \mathcal{A}: \left|\frac{zf'(z)}{f(z)}-c\right|< d \right\}\] is very closely related to the class of Janowski starlike functions. For the class $\mathcal{{F}}_1$, the $ {\mathcal{S}_{p}}$ radius, $\mathcal{S}^{*}(1/2)$ radius and  $\mathcal{S}^{*}(1,1/2)$ radius  are all equal. It is also clear that  $\mathcal{S}^{*}(1,\sqrt{2}-1) \subseteq \mathcal{S}_{L}^{*}$.  The $\mathcal{S}_{L}^{*}$ radius and $ \mathcal{S}^{*}(1, \sqrt{2}-1)$ radius are also equal.

(ii) The radius of strong starlikeness is clearly sharp for $\gamma=0$ and for other cases, we have given a lower bound only. 
\end{remark}

\begin{definition}
	Let $\mathcal{F}_{2}$ be the class of functions $f \in \mathcal{A}$ satisfying the inequality \[\left|\frac{f(z)}{g(z)}-1\right|<1 \quad  (z \in \mathbb{D}) \]
	for some $g \in \mathcal{A}$ with \[ \operatorname{Re}\left (\frac{1+z}{z}g(z)\right )>0  \quad (z \in \mathbb{D}).\]
\end{definition}
The functions $ f_2,  g_2: \mathbb{D} \longrightarrow \mathbb{C}$ defined by
\begin{equation}\label{extremal1}
f_{2}(z)=\frac{z(1-z)^{2}}{(1+z)^{2}}   \quad\text{and}\quad   g_{2}(z)=\frac{z(1-z)}{(1+z)^{2}}
\end{equation}
satisfy
 \[\left| \frac{f_{2}(z)}{g_{2}(z)}-1 \right| =|z| < 1, \quad
  \operatorname{Re}\frac{(1+z)}{z}g_{2}(z)=\operatorname{Re}\frac{1-z}{1+z}>0\] and hence the function $f_2\in \mathcal{F}_{2}$. This proves that the class $ \mathcal{F}_{2}$ is non-empty and the function $f_2$ is extremal function for the radius problems we consider.  As
\[ f_2(z)=z-4 z^2+8 z^3-12 z^4+\dotsc,\]
the functions in  $ \mathcal{F}_{2}$ are not necessarily  univalent.
Since
\[ f_2'(z)=\frac{ 1-5 z+3 z^2+ z^3 }{(1+z)^3},\]  we have $f_2'(\sqrt{5}-2)=0$ and
it follows by the first part of the following theorem,  that the radius of univalence of the functions in class $\mathcal{F}_{2}$ is $\sqrt{5}-2 \approx 0.2361$.

	\begin{theorem}
		For the class $\mathcal{F}_{2}$  the following results hold:
		\begin{enumerate}[label=(\roman*)]
			\item The $\mathcal{S}^{*}(\alpha)$ radius
			$R_{\mathcal{S}^{*}(\alpha)}=(1-\alpha)/\left(2+\sqrt{4+(\alpha-1)^{2}}\right)$$,\quad 0 \leq \alpha< 1$.	
			
			\item The $\mathcal{S}_{L}^{*}$ radius
			$R_{\mathcal{S}_{L}^{*}} \geq (\sqrt{5}-2)/(1+\sqrt{2}) \approx 0.0977$.
			
			\item The $\mathcal{S}_{p}$ radius
			$R_{\mathcal{S}_{p}}= \sqrt{17}-4 \approx 0.1231$.
			
			\item The $\mathcal{S}_{ e}^{*}$ radius
			$R_{\mathcal{S}_{e}^{*}}= (2e-2)/(4e+\sqrt{20 e^{2}-8 e+4 }) \approx 0.1543$.
			
			\item The $\mathcal{S}_{c}^{*}$ radius
			$R_{\mathcal{S}_{c}^{*}}=  \sqrt{10}-3\approx 0.1623$.

			\item The $\mathcal{S}_{sin}^{*}$ radius
			$R_{\mathcal{S}_{sin}^{*}} \geq \sin 1 /\left(2+\sqrt{4+ \sin 1 (2+ \sin 1)}\right) \approx 0.1858$.
			
			\item The $\mathcal{S}_{\leftmoon}^{*}$ radius
			$R_{\mathcal{S}_{\leftmoon}^{*}}= (2-\sqrt{2})/\left(2+\sqrt{10-4\sqrt{2}}\right)\approx 0.1434$.
			
			\item The $\mathcal{S}_{R}^{*}$ radius
			$R_{\mathcal{S}_{R}^{*}}= (3-2\sqrt{2})/(2+\sqrt{21-12\sqrt{2}}) \approx 0.0428$.
			
			\item The $\mathcal{S}_{RL}^{*}$ radius
			$R_{\mathcal{S}_{RL}^{*}} $ is atleast the smallest root $(\approx 0.0692)$ in $[0,1]$ of the equation
			\begin{align*}
			r^2(r+4)^2+(r^2-1)^2+(r^2-1)\sqrt{(r^2+\sqrt{2})(2-\sqrt{2}-r^2)}&\\   -(1+\sqrt{2}(r^2-1))^2      &= 0 .
			\end{align*}
			
			\item The $\mathcal{S}_{\gamma}^{*}$ radius  $R_{\mathcal{S}_\gamma^{*}} \geq \sin(\pi\gamma/2)/(2+\sqrt{4+\sin(\pi\gamma/2)},  \quad 0 < \gamma \leq 1$.

		\end{enumerate}
		\end{theorem}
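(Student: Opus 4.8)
The plan is to run the same machinery as in the proof of the preceding theorem, changing only the estimate contributed by the factor $f/g$. Given $f\in\mathcal{F}_2$, I would fix the associated $g$ and set $p_1(z)=(1+z)g(z)/z$ and $p_2(z)=f(z)/g(z)$. The hypotheses and the normalizations force $p_1\in\mathcal{P}$, $p_1(0)=p_2(0)=1$, and $|p_2(z)-1|<1$; since $1+z$ maps $\mathbb{D}$ univalently onto the disk $|w-1|<1$, the last condition says exactly that $p_2\prec 1+z$, so $p_2=1+w$ for some Schwarz function $w$ (analytic, $w(0)=0$, $|w|<1$). Exactly as before, $f(z)=zp_1(z)p_2(z)/(1+z)$, and the logarithmic derivative yields the same identity as \eqref{main1},
\[\frac{zf'(z)}{f(z)}=\frac{zp_1'(z)}{p_1(z)}+\frac{zp_2'(z)}{p_2(z)}+\frac{1}{1+z}.\]

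The step I expect to require the most care is the bound on the middle term, since this is the only place where $\mathcal{F}_2$ genuinely differs from $\mathcal{F}_1$. Whereas there $f/g\in\mathcal{P}$ contributed $2r/(1-r^2)$ through \eqref{shah}, here I would use $p_2=1+w$ together with Schwarz's lemma $|w(z)|\le|z|$ and the Schwarz--Pick estimate $|w'(z)|\le(1-|w(z)|^2)/(1-|z|^2)$ to get, for $|z|\le r$,
\[\left|\frac{zp_2'(z)}{p_2(z)}\right|=\left|\frac{zw'(z)}{1+w(z)}\right|\le\frac{r\,(1-|w(z)|^2)}{(1-r^2)\,|1+w(z)|}\le\frac{r\,(1+|w(z)|)}{1-r^2}\le\frac{r}{1-r},\]
using $|1+w(z)|\ge 1-|w(z)|$ and $|w(z)|\le r$. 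Combining this with the disk \eqref{disk} for $1/(1+z)$ and the bound $|zp_1'(z)/p_1(z)|\le 2r/(1-r^2)$ (the $\alpha=0$ case of \eqref{shah}), the three contributions add to give the governing disk
\[\left|\frac{zf'(z)}{f(z)}-\frac{1}{1-r^2}\right|\le\frac{r}{1-r^2}+\frac{2r}{1-r^2}+\frac{r}{1-r}=\frac{4r+r^2}{1-r^2},\]
the exact analogue of \eqref{star1} with $5r$ replaced by $4r+r^2$. In particular $\operatorname{Re}(zf'(z)/f(z))\ge(1-4r-r^2)/(1-r^2)$, which is nonnegative for $r\le\sqrt5-2$, so every radius below is at most $\sqrt5-2$.

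With this disk in hand the ten parts become essentially mechanical, because each target region already carries an inscribed-disk lemma recorded in the previous proof. For each class I would substitute the centre $a=1/(1-r^2)$ and radius $(4r+r^2)/(1-r^2)$ into the appropriate containment---\eqref{parabolic} for $\mathcal{S}_p$, \eqref{exponential} for $\mathcal{S}_e^*$, \eqref{cardiod} for $\mathcal{S}_c^*$, \eqref{sine} for $\mathcal{S}_{sin}^*$, \eqref{lune} for $\mathcal{S}_{\leftmoon}^*$, \eqref{rational} for $\mathcal{S}_R^*$, \eqref{reverse} for $\mathcal{S}_{RL}^*$ and \eqref{strongstar} for $\mathcal{S}_\gamma^*$---and solve the resulting inequality for the largest admissible $r$. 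For instance the parabolic condition collapses to $r^2+8r-1\le0$, giving $R_{\mathcal{S}_p}=\sqrt{17}-4$; the cardioid condition to $r^2+6r-1\le0$, giving $R_{\mathcal{S}_c^*}=\sqrt{10}-3$; the strong-starlikeness condition to $r^2+4r-\sin(\pi\gamma/2)\le0$; and the order-$\alpha$ requirement $(1-4r-r^2)/(1-r^2)\ge\alpha$ to the quadratic $(1-\alpha)r^2+4r-(1-\alpha)=0$, whose root in $[0,1]$ is the stated value. For the lemniscate class I would instead estimate $|zf'(z)/f(z)-1|\le(4r+2r^2)/(1-r^2)$ by the triangle inequality and require this to be at most $\sqrt2-1$.

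Finally I would check sharpness on the extremal function $f_2$ of \eqref{extremal1}, for which a direct computation gives $zf_2'(z)/f_2(z)=(1-4z-z^2)/(1-z^2)$ and $p_2(z)=1-z$ realizes equality in the Schwarz--Pick step at $z=\pm r$. At $z=r$ this quantity equals $(1-4r-r^2)/(1-r^2)$, which at the critical radius is precisely the left-hand (minimal real part) boundary point of the order-$\alpha$, parabolic, exponential, cardioid, lune and rational regions; those six radii are therefore sharp. For the lemniscate, sine, reverse-lemniscate and strong-starlike classes the binding tangency of the inscribed disk occurs either at the right-hand boundary point or off the real axis, so the real values $zf_2'(z)/f_2(z)$ at $z=\pm r$ do not reach that boundary; consequently these four radii are obtained only as the lower bounds recorded in the statement.
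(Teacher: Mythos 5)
Your proposal is correct and follows essentially the same route as the paper: the same factorization $f=zp_1p_2/(1+z)$, the same governing disk $\left|zf'(z)/f(z)-1/(1-r^2)\right|\le(4r+r^2)/(1-r^2)$, the same inscribed-disk lemmas for each target region, and the same sharpness/non-sharpness conclusions via $f_2$. The only difference is cosmetic: you bound the $f/g$ term by writing $f/g=1+w$ and applying the Schwarz--Pick inequality, whereas the paper rewrites the hypothesis as $\operatorname{Re}(g(z)/f(z))>1/2$ and quotes the estimate \eqref{shah} with $\alpha=1/2$ for $p_2=g/f$ (with a minus sign in \eqref{main2}); both yield the identical bound $r/(1-r)$.
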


	\begin{proof}
		Let the function $ f \in \mathcal{F}_{2}$. Then $\left|f(z)/g(z)-1\right|<1$ if and only if $	\operatorname{Re} (g(z)/f(z))>1/2 $.  Let the function $ g: \mathbb{D} \longrightarrow \mathbb{C}$ be chosen such that
		\begin {equation} \label{g1}
		\operatorname{Re}\frac{g(z)}{f(z)}>1/2 \quad \text{and} \quad
		\operatorname{Re}\left (\frac{1+z}{z}g(z)\right )>0  \quad (z \in \mathbb{D}).
	\end{equation}
Define the functions $p_{1}, p_{2}:\mathbb{D} \longrightarrow \mathbb{C} $ by
\begin{equation} \label{g0}
	p_{1}(z)=\frac{1+z}{z}g(z) \quad \text{and} \quad p_{2}(z)= \frac{g(z)}{f(z)}.
\end{equation}
By (\ref{g1}) and (\ref{g0}),  we have $ p_{1} \in \mathcal{P}$, $ p_{2} \in \mathcal{P}(1/2)$
and $f(z)=z p_{1}(z)/((1+z) p_{2}(z))$.
It can be shown by calculation that
\begin{equation} \label{main2}
\frac{zf'(z)}{f(z)}=\frac{z p_{1}^{'}(z)}{p_{1}(z)}-\frac{z p_{2}^{'}(z)}{p_{2}(z)}+\frac{1}{1+z}.
\end{equation}
Using (\ref{disk}) and (\ref{shah}), it follows from (\ref{main2}) that the function $f$ maps the disk $\left|z\right| \leq r$ onto the disk
\begin{equation} \label{star2}
\left | \frac{zf'(z)}{f(z)} - \frac{1}{1-r^{2}}  \right |  \leq \frac{4r+r^{2}}{1-r^{2}}.
\end{equation}The classes we discuss here are all subclasses of starlike functions. By \eqref{star2}, we have
\begin{equation}\label{starlike2}
	\operatorname{Re} \frac{zf'(z)}{f(z)} \geq \frac{1-4r-r^{2}}{1-r^{2}} \geq 0, \quad (r \leq \sqrt{5}-2).
\end{equation}
Hence all the radii that we estimate will be less than $\sqrt{5}-2 \approx 0.2361$. Also,  for $0 < r \leq \sqrt{5}-2$,  the centre of disk in (\ref{star2}) lies in the interval $[1,  1/4(\sqrt{5}-2)] \approx [1, 1.05902]$.

\begin{enumerate}[label=(\roman*)]
	
	\item
	The number $r=R_{\mathcal{S}^{*}(\alpha)} $ is the root of
	$  (\alpha-1) r^{2}-4r+ 1-\alpha=0$ in $[0,1]$ and hence, for $0<r\leq R_{\mathcal{S}^{*}(\alpha)}$, it follows
	by (\ref{starlike2}) that
	\[ 	\operatorname{Re} \frac{zf'(z)}{f(z)} \geq   \frac{1-4r-r^{2}}{1-r^{2}}\geq \alpha.\]
	For the functions  $f_{2} \in \mathcal{F}_{2}$  given by \eqref{extremal1}, we have
	\[\frac{zf_{2}^{'}(z)}{f_{2}(z)}=\frac{1-4z-z^2}{1-z^{2}}= \frac{1-4r-r^{2}}{1-r^{2}}=\alpha, \quad  (z=r= R_{\mathcal{S}^{*}(\alpha)})\]
	and this shows that the radius is sharp.	

	\item
	It   follows from (\ref{star2}) that
	\begin{equation} \label{estimate1}
	\left | \frac{zf'(z)}{f(z)} - 1  \right |\leq \left | \frac{zf'(z)}{f(z)} - \frac{1}{1-r^{2}}  \right |+\frac{r^2}{1-r^2}  \leq \frac{4r+ 2r^{2}}{1-r^{2}}.
	\end{equation} The number $r=R_{\mathcal{S}_{L}^{*}}$ is the root in $[0,1]$ of $(4r+ 2r^{2})=\sqrt{2}-1(1-r^{2})$ and for $0 < r \leq R_{\mathcal{S}_{L}^{*}}$,  we have
	\begin{equation} \label{lemniradius1}
	\frac{4r+ 2r^{2}}{1-r^{2}} \leq \sqrt{2}-1.
	\end{equation}
	Therefore,  by (\ref{estimate1}) and (\ref{lemniradius1}), for $0 < r \leq R_{\mathcal{S}_{L}^{*}}$, we have
	\begin{equation}\label{connect2}
		\left | \frac{zf'(z)}{f(z)} - 1  \right |  \leq \frac{4r+ 2r^{2}}{1-r^{2}} \leq \sqrt{2}-1.
	\end{equation}
	For $0 < r \leq R_{\mathcal{S}_{L}^{*}}$,  using triangle inequality and \eqref{connect2}, we have
	\begin{equation}\label{tconnect2}
	\left | \frac{zf'(z)}{f(z)} + 1  \right | \leq \sqrt{2}+1	
	\end{equation}
	 and hence by \eqref{connect2} and \eqref{tconnect2}
	\[ \left|\left(\frac{zf'(z)}{f(z)}\right)^{2} - 1\right|\leq \left|\frac{zf'(z)}{f(z)} + 1\right|\left|\frac{zf'(z)}{f(z)} - 1\right|\leq (\sqrt{2}+1)(\sqrt{2}-1)=1.\]The result obtained is not sharp.
		
	\item
	If $0 < r \leq R_{\mathcal{S}_{p}}$,  then $a= 1/(1-r^{2}) \leq 3/2$ and
	\[\frac{4r+r^{2}}{1-r^{2}} \leq \frac{1}{1-r^{2}}-\frac{1}{2}.\] Thus,  by \eqref{parabolic},  we see that the disk in (\ref{star2}) lies inside the parabolic region $\Omega_{PAR}$.
	Sharpness follows for the function $f_{2}$ defined in (\ref{extremal1}). At $z:=\rho= R_{\mathcal{S}_{p}}$, we have
	\[\operatorname{Re}\frac{zf'(z)}{f(z)}=\frac{1-4\rho-\rho^2}{1-\rho^2}=\frac{4\rho}{1-\rho^2}=\left|\frac{-4\rho}{1-\rho^2}\right|=\left|\frac{zf'(z)}{f(z)}-1\right|.\]

\item
 For $0 < r \leq R_{\mathcal{S}_{e}^{*}}$, we have
$1/e \leq  a= 1/(1-r^{2}) \leq (\ e+\ e^{-1})/2$ and
\[\frac{4r+r^{2}}{1-r^{2}} \leq \frac{1}{1-r^{2}}-\frac{1}{e}.\]
By  \eqref{exponential}, the disk in \eqref{star2}  lies inside
$ \Omega_{e} $   for $0 < r \leq R_{\mathcal{S}_{e}^{*}}$ proving   that  the $\mathcal{S}_{e}^{*}$ radius for the class $\mathcal{F}_{2}$ is $R_{\mathcal{S}_{e}^{*}}$.
The sharpness follows for the function $f_{2}$ defined in (\ref{extremal1}).
Indeed at $z:=\rho= R_{\mathcal{S}_{e}^{*}}$, we have  \[ \left| \log \frac{zf_{2}'(z)}{f_{2}(z)} \right|= \left| \log \frac{1-4 \rho-\rho^{2}}{1-\rho^{2}}\right|=1.\]

\item
If $0 < r \leq R_{\mathcal{S}_{c}^{*}}$, then
\[\frac{4r+r^{2}}{1-r^{2}} \leq  \frac{1}{1-r^{2}}-\frac{1}{3} .\]  By (\ref{cardiod}), we see that the disk in (\ref{star2}) lies inside $\Omega_{c}$, if $0 < r \leq R_{\mathcal{S}_{c}^{*}}$. The  result is sharp for the function $f_2$ defined in (\ref{extremal1}) (See Figure \ref{fig:pb2}.(\subref{fig2:sub1})).  At $z:=\rho= R_{\mathcal{S}_{c}^{*}}$,
\[\left|\frac{zf'(z)}{f(z)}\right|=\left|\frac{1-4\rho-\rho^2}{1-\rho^2}\right|=\frac{1}{3}=\Omega_c(-1)\in\partial \Omega_c(\mathbb{D}).\]
\item
For $0 < r \leq R_{\mathcal{S}_{sin}^{*}}$, $\left|a-1\right| \leq \sin 1$ and \[\frac{4r+r^{2}}{1-r^{2}} \leq  \sin 1-\frac{r^{2}}{1-r^{2}} ,\] then
it is evident from (\ref{sine}) that the disk in (\ref{star2}) lies inside $\Omega_{s}$.
The radius is not sharp.
\item
If $0 < r \leq R_{\mathcal{S}_{\leftmoon}^{*}}$, then
\[\frac{r^{2}+4r-1}{1-r^{2}} \leq 1-\sqrt{2}.\]
Thus by \eqref{lune}, the disk in \eqref{star2} lies inside $\{w \in \mathbb{C}: \left | w^{2}-1 \right |< 2\left|w\right| \}$ and hence $ f \in \mathcal{S}_{\leftmoon}^{*}$.
The sharpness follows for the functions defined in (\ref{extremal1}) (See Figure \ref{fig:pb2}.(\subref{fig2:sub2})).
At $z:=\rho= R_{\mathcal{S}_{\leftmoon}^{*}}$, we have
\[ \left| \left(\frac{zf_{2}'(z)}{f_{2}(z)}\right)^{2} - 1    \right|= \left| \left(\frac{\rho^{2}+4\rho-1}{1-\rho^{2}}\right)^{2} - 1    \right|=2\left| \frac{\rho^{2}+4\rho-1}{1-\rho^{2}} \right|=2\left| \frac{zf_{1}'(z)}{f_{1}(z)}\right|.\]
\begin{figure}
	\centering
	\begin{subfigure}{.5\textwidth}
		\centering
		\includegraphics[height=3cm,keepaspectratio]{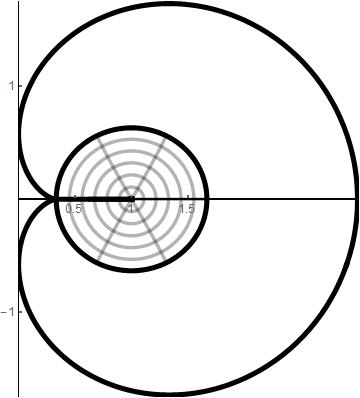}
		\caption{Sharpness of class $\mathcal{S}_{c}^{*}$}
		\label{fig2:sub1}
	\end{subfigure}%
	\begin{subfigure}{.5\textwidth}
		\centering
		\includegraphics[height=3cm,keepaspectratio]{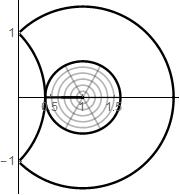}
		\caption{Sharpness of class $\mathcal{S}_{\leftmoon}^{*}$}
		\label{fig2:sub2}
	\end{subfigure}
	\caption{Sharpness of starlike functions associated with cardioid and lune.}
	\label{fig:pb2}
\end{figure}
\item
If $0 < r \leq R_{\mathcal{S}_{R}^{*}}$, $2(\sqrt{2}-1) < a= 1/(1-r^{2}) \leq \sqrt{2}$ and
\[\frac{r^{2}+4r-1}{1-r^{2}} \leq 2-2\sqrt{2},  \quad 0 < r \leq R_{\mathcal{S}_{R}^{*}}.\] Then, by \eqref{rational}, the disk in (\ref{star2}) lies inside $\psi(\mathbb{D})$. For the function defined in (\ref{extremal1}),  at $z:=\rho= R_{\mathcal{S}_{R}^{*}}$,
\[\left|\frac{zf'(z)}{f(z)}\right|=\left|\frac{1-4\rho-\rho^2}{1-\rho^2}\right|=2(\sqrt{2}-1)=\psi(1)\in\partial \psi(\mathbb{D}).\]

\item
If $0 < r \leq R_{\mathcal{S}_{RL}^{*}}$, $ \sqrt{2}/3 \leq a= 1/(1-r^{2}) < \sqrt{2}$, and
\begin{align*}
(4r+r^{2})^{2}- (1-r^{2})\sqrt{(1-r^{2})^{2}-((\sqrt{2}-\sqrt{2}r^{2})-1)^{2}}&\\ +         (1-r^{2})^{2}-((\sqrt{2}-\sqrt{2}r^{2})-1)^{2}&\leq 0.
\end{align*}
Then by \eqref{reverse} the disk in \eqref{star2} lies inside the region $\{w: | (w-\sqrt{2})^{2}-1 |< 1\}$. The result obtained is not sharp.

\item
If $0 < r \leq R_{\mathcal{S}_\gamma^{*}}$ and $0 < \gamma \leq 1$, then
$ r^{2}+4r\leq (\sin(\pi\gamma)/2)$.
It is evident from \eqref{strongstar} that the disk in (\ref{star2}) is contained in the sector $\left| \arg w \right| \leq (\pi\gamma)/2,  0<\gamma \leq 1$ if $0 < r \leq R_{\mathcal{S}_\gamma^{*}}$. \qedhere
\end{enumerate}
\end{proof}
The problem of determination of the exact radii of starlikeness associated with lemniscate of Bernoulli,  sine function, reverse lemniscate and strongly starlike function is open.


\begin{definition}
	Let $\mathcal{{F}}_{3}$ be the class of functions $f \in \mathcal{A}$ satisfying the inequality  \[ \operatorname{Re}\left (\frac{1+z}{z}f(z)\right )>0  \quad (z \in \mathbb{D}).\]
\end{definition}
The function $ f_3: \mathbb{D} \longrightarrow \mathbb{C}$ defined by
\begin{equation}\label{extremal3}
f_{3}(z)=\frac{z(1-z)}{(1+z)^{2}}
\end{equation}
satisfy \[\operatorname{Re}\frac{(1+z)f_{3}(z)}{z}=\operatorname{Re}\frac{1-z}{1+z}>0\]
 and hence the function $f_3\in \mathcal{{F}}_{3}$. This proves that the class $ \mathcal{{F}}_{3}$ is non-empty. This function $f_3$ is extremal function for the radii problem we consider. As
\[ f_3(z)=z-3 z^2+5 z^3-7 z^4+\dotsc, \]
the functions in  $ \mathcal{{F}}_{3}$ are not necessarily  univalent.
Since
\[ f_3'(z)=\frac{ 1-3z }{(1+z)^3},\]  we have $f_3'(1/3)=0$ and
it follows, by the first part of the following theorem,  that the radius of univalence of the functions in class $\mathcal{{F}}_{3}$ is $1/3 \approx 0.3333$.

\begin{theorem}
	For the class $\mathcal{{F}}_{3}$,  the following results hold:
	\begin{enumerate}[label=(\roman*)]
		\item The $\mathcal{S}^{*}(\alpha)$ radius
		$R_{\mathcal{S}^{*}(\alpha)}=2(1-\alpha)/\left(3+\sqrt{9-4\alpha(1-\alpha)}\right)$$, \quad 0 \leq \alpha< 1$.	
		\item The $\mathcal{S}_{L}^{*}$ radius
		$R_{\mathcal{S}_{L}^{*}}=(2\sqrt{2}-2)/\left(3+\sqrt{9-4\sqrt{2}(1-\sqrt{2})}\right) \approx 0.1301 $.
		
		\item The $\mathcal{S}_{p}$ radius
		$R_{\mathcal{S}_{p}}= 3-2\sqrt{2} \approx 0.1716$.
		
		\item The $\mathcal{S}_{e}^{*}$ radius
		$R_{\mathcal{S}_{e}^{*}}= (2e-2)/\left(3e+\sqrt{9e^{2}+4(1-e) } \right)\approx 0.2165$.
		
		\item The $\mathcal{S}_{c}^{*}$ radius
		$R_{\mathcal{S}_{c}^{*}}= (9-\sqrt{73})/2 \approx 0.2279$.

		\item The $\mathcal{S}_{sin}^{*}$ radius
		$R_{\mathcal{S}_{sin}^{*}}= 2 \sin 1/\left(3+\sqrt{9+4 \sin 1 (1+ \sin 1)}\right) \approx 0.2439$.
		
		\item The $\mathcal{S}_{\leftmoon}^{*}$ radius
		$R_{\mathcal{S}_{\leftmoon}^{*}}= (4-2\sqrt{2})/\left(3+\sqrt{25-12\sqrt{2}}\right) \approx 0.2008$.
		
		\item The $\mathcal{S}_{R}^{*}$ radius
		$R_{\mathcal{S}_{R}^{*}}=  (6-4\sqrt{2})/\left(3+\sqrt{65-40\sqrt{2}}\right) \approx 0.0581$.
		
		\item The $\mathcal{S}_{RL}^{*}$ radius
		$R_{\mathcal{S}_{RL}^{*}} $ is the root $(\approx 0.0926)$ in $[0,1]$ of the equation
	\begin{align*}
	9r^{2}+(r^2-1)^{2}+(r^2-1)\sqrt{(r^2+\sqrt{2})(2-\sqrt{2}-r^2)}	&\\  -(1+ \sqrt{2}(r^2-1))^{2}      &= 0.
	\end{align*}
		
		\item The $\mathcal{S}_\gamma^{*}$ radius  $R_{\mathcal{S}_\gamma^{*}} \geq \sin(\pi\gamma/2)/3,  \quad 0 < \gamma \leq 1$.
		
	\end{enumerate}
\end{theorem}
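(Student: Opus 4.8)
The plan is to adapt the template of the two preceding theorems, the simplification being that the defining condition for $\mathcal{F}_3$ involves a single Carath\'eodory function rather than two. First I would set $p(z)=(1+z)f(z)/z$, so that $p\in\mathcal{P}$ by hypothesis and $f(z)=zp(z)/(1+z)$. Computing the logarithmic derivative then gives
\[
\frac{zf'(z)}{f(z)}=\frac{zp'(z)}{p(z)}+\frac{1}{1+z},
\]
the $\mathcal{F}_3$-analogue of \eqref{main1} and \eqref{main2} but with no second logarithmic-derivative term.

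Next I would bound the two summands on $|z|\le r$. The bilinear map $1/(1+z)$ carries $|z|\le r$ onto the disk \eqref{disk} centred at $1/(1-r^2)$ with radius $r/(1-r^2)$, while \eqref{shah} with $\alpha=0$ gives $|zp'(z)/p(z)|\le 2r/(1-r^2)$. Adding these through the triangle inequality produces the governing disk
\[
\left|\frac{zf'(z)}{f(z)}-\frac{1}{1-r^2}\right|\le\frac{3r}{1-r^2},\qquad(|z|\le r),
\]
which plays exactly the role of \eqref{star1} and \eqref{star2}, with the coefficient $3$ in place of $5$ and no additional $r^2$ term. In particular $\operatorname{Re}(zf'(z)/f(z))\ge(1-3r)/(1-r^2)$, forcing every radius to be at most $1/3$, with the centre of the disk confined to $[1,9/8]$.

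From this point each part reduces to the same inclusion check as in the earlier theorems: require the disk above to lie inside the region characterising the target class, invoking \eqref{parabolic} for $\mathcal{S}_p$, \eqref{exponential} for $\mathcal{S}_e^*$, \eqref{cardiod} for $\mathcal{S}_c^*$, \eqref{sine} for $\mathcal{S}_{sin}^*$, \eqref{lune} for $\mathcal{S}_{\leftmoon}^*$, \eqref{rational} for $\mathcal{S}_R^*$, \eqref{reverse} for $\mathcal{S}_{RL}^*$ and \eqref{strongstar} for $\mathcal{S}_\gamma^*$. For instance the $\mathcal{S}^*(\alpha)$ radius is the root in $[0,1]$ of $\alpha r^2-3r+1-\alpha=0$, which rationalises to the stated $2(1-\alpha)/(3+\sqrt{9-4\alpha(1-\alpha)})$; the lemniscate and lune cases additionally use the shifted estimate $|zf'(z)/f(z)-1|\le(3r+r^2)/(1-r^2)$ obtained by moving the centre from $1/(1-r^2)$ to $1$ as in \eqref{estimate}.

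Finally I would verify sharpness on the extremal function $f_3$ of \eqref{extremal3}: a direct computation gives $zf_3'(z)/f_3(z)=(1-3z)/(1-z^2)$, and evaluation at $z=\pm r$ shows the boundary of the relevant region is met at the claimed radius; unlike the $\mathcal{F}_2$ situation, the clean disk radius here makes even the lemniscate bound attainable, since $|zf_3'(z)/f_3(z)-1|=(3r+r^2)/(1-r^2)$ exactly at $z=-r$. The main obstacle is the reverse-lemniscate part (ix): the inclusion coming from \eqref{reverse} is not a solvable quadratic but the implicit algebraic relation displayed in the statement, so there I would only isolate the relevant root in $[0,1]$ and confirm its value numerically, as was done for $\mathcal{F}_1$.
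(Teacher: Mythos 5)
Your proposal matches the paper's proof essentially step for step: the same single Carath\'eodory decomposition $f(z)=zp(z)/(1+z)$, the same logarithmic-derivative identity, the same covering disk $\left|zf'(z)/f(z)-1/(1-r^{2})\right|\leq 3r/(1-r^{2})$ obtained from \eqref{disk} and \eqref{shah}, the same inclusion lemmas for each subclass, and the same sharpness verification via $zf_3'(z)/f_3(z)=(1-3z)/(1-z^{2})$ at $z=\pm r$. No gaps; this is the paper's argument.
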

\begin{proof}
	
	Let the function $ f \in \mathcal{{F}}_{3}$. Then
	\begin{equation} \label{g2}
	\operatorname{Re}\left (\frac{1+z}{z}f(z)\right )>0  \quad (z \in \mathbb{D}).
	\end{equation}
	Define the function $h:\mathbb{D} \longrightarrow \mathbb{C} $ by
	\begin{equation}\label{g5}
	h(z)=\frac{1+z}{z}f(z).
	\end{equation}
	By (\ref{g2}) and (\ref{g5}) we have $h \in \mathcal{P}$
	and $f(z)= z h(z)/(1+z) $.  \\
	Therefore,  by calculation it can be shown that
	\begin{equation} \label{main3}
	\frac{zf'(z)}{f(z)}=\frac{z h^{'}(z)}{h(z)}+\frac{1}{1+z}.
	\end{equation}
	Using (\ref{disk}) and (\ref{shah}), it follows from (\ref{main3}) that $f$ maps the disk $\left|z\right| \leq r$ onto the disk
	\begin{equation} \label{star3}
	\left | \frac{zf'(z)}{f(z)} - \frac{1}{1-r^{2}}  \right |  \leq \frac{3r}{1-r^{2}}.
	\end{equation}As the classes we discuss here are all subclasses of starlike functions. By \eqref{star3}, we have
	\begin{equation}\label{starlike3}
		\operatorname{Re} \frac{zf'(z)}{f(z)} \geq \frac{1-3r}{1-r^{2}} \geq 0, \quad (r \leq 1/3).
	\end{equation}
	Hence all the radii that we estimate will be less than $1/3$. Note that, for $0 < r \leq 1/3$,  the centre of disk in (\ref{star3}) lies in the interval $[1,  9/8] \approx [1, 1.125]$.
	
	\begin{enumerate}[label=(\roman*)]
		\item
		The number $r=R_{\mathcal{S}^{*}(\alpha)} $ is the root of
		$  \alpha r^{2}-3r+ 1-\alpha=0$ in $[0,1]$ and hence, for $0<r\leq R_{\mathcal{S}^{*}(\alpha)}$, it follows
		by (\ref{starlike3}) that
		\[ 	\operatorname{Re} \frac{zf'(z)}{f(z)} \geq   \frac{1-3r}{1-r^{2}}\geq \alpha.\]
		For the function  $f_{3} \in \mathcal{{F}}_{3}$  given by \eqref{extremal3}, we have
		\[\frac{zf_{3}^{'}(z)}{f_{3}(z)}=\frac{1-3z}{1-z^{2}}= \frac{1-3r}{1-r^{2}}=\alpha, \quad (z=r= R_{\mathcal{S}^{*}(\alpha)}) \]
		and this shows that the radius is sharp (See Figure \ref{fig:pb3}.(\subref{fig3:sub1})).
	
		\item
		It   follows from (\ref{star3}) that
		\begin{equation} \label{estimate3}
		\left | \frac{zf'(z)}{f(z)} - 1  \right |\leq \left| \frac{zf'(z)}{f(z)} - \frac{1}{1-r^2}  \right|+\frac{r^2}{1-r^2} \leq \frac{3r+ r^{2}}{1-r^{2}}.
		\end{equation}The number $r=R_{\mathcal{S}_{L}^{*}}$ is the root in $[0,1]$ of $(3r+ r^{2})=(\sqrt{2}-1)(1-r^{2})$ and for $0 < r \leq R_{\mathcal{S}_{L}^{*}}$,  we have
		\begin{equation} \label{lemniradius3}
		\frac{3r+r^{2}}{1-r^{2}} \leq \sqrt{2}-1.
		\end{equation}
		Therefore,  by (\ref{estimate3}) and (\ref{lemniradius3}), for $0 < r \leq R_{\mathcal{S}_{L}^{*}}$, we have
		\begin{equation}\label{connect3}
		\left | \frac{zf'(z)}{f(z)} - 1  \right |  \leq \frac{3r+ r^{2}}{1-r^{2}} \leq \sqrt{2}-1.
		\end{equation}
		For $0 < r \leq R_{\mathcal{S}_{L}^{*}}$,  using triangle inequality and \eqref{connect3}, we have
		\begin{equation}\label{tconnect3}
			\left | \frac{zf'(z)}{f(z)} + 1  \right | \leq \sqrt{2}+1
		\end{equation}
		and hence by \eqref{connect3} and \eqref{tconnect3}
		\[ \left|\left(\frac{zf'(z)}{f(z)}\right)^{2} - 1\right|\leq \left|\frac{zf'(z)}{f(z)} + 1\right|\left|\frac{zf'(z)}{f(z)} - 1\right|\leq (\sqrt{2}+1)(\sqrt{2}-1)=1.\]
		The number  $\rho=  R_{\mathcal{S}_{L}^{*}}$ satisfies $(1+3\rho)/(1-\rho^{2})= \sqrt{2}$. Using this, we see that the function $f_{3}$ defined in (\ref{extremal3}) satisfies
		\[ \left| \left(\frac{zf_{3}'(z)}{f_{3}(z)}\right)^{2} - 1    \right|=   \left| \left(\frac{1-3z}{1-z^{2}}\right )^{2}-1\right|= \left| \left(\frac{1+3\rho}{1-\rho^{2}}\right)^{2} - 1    \right|=1, \quad (z:= -\rho= -R_{\mathcal{S}_{L}^{*}}).\]
		This shows that the radius is sharp.
		
		\item
		If $0 < r \leq R_{\mathcal{S}_{p}}$,  then $a= 1/(1-r^{2}) \leq 3/2$ and
		\[\frac{3r}{1-r^{2}} \leq \frac{1}{1-r^{2}}-\frac{1}{2}.\] Thus,  by \eqref{parabolic},  we see that the disk in (\ref{star3}) lies inside the parabolic region $\Omega_{PAR}$.  Sharpness follows for the function $f_{3}$ defined in (\ref{extremal3}). At $z:=\rho= R_{\mathcal{S}_{p}}$, we have
		\[\operatorname{Re}\frac{zf'(z)}{f(z)}=\frac{1-3\rho}{1-\rho^2}=\frac{3\rho-\rho^2}{1-\rho^2}=\left|\frac{\rho^2-3\rho}{1-\rho^2}\right|=\left|\frac{zf'(z)}{f(z)}-1\right|.\]
		
		\item
		For $0 < r \leq R_{\mathcal{S}_{e}^{*}}$, we have  $1/e \leq  a= 1/(1-r^{2}) \leq (\ e+\ e^{-1})/2$ and
		\[\frac{3r}{1-r^{2}} \leq \frac{1}{1-r^{2}}-\frac{1}{e}.\]
		By  \eqref{exponential}, the disk in \eqref{star3}  lies inside
		$ \Omega_{e} $   for $0 < r \leq R_{\mathcal{S}_{e}^{*}}$ and it proves   that  the $\mathcal{S}_{e}^{*}$ radius for the class $\mathcal{F}_{3}$ is $R_{\mathcal{S}_{e}^{*}}$.
		The sharpness follows for the function $f_{3}$ defined in (\ref{extremal3}).
		Indeed at $z:=\rho= R_{\mathcal{S}_{e}^{*}}$, we have  \[ \left| \log \frac{zf_{1}'(z)}{f_{1}(z)} \right|= \left| \log \frac{1-3 \rho}{1-\rho^{2}}\right|=1.\]
		
		\item
		If $0 < r \leq R_{\mathcal{S}_{c}^{*}}$, then
		\[\frac{3r}{1-r^{2}} \leq \frac{1}{1-r^{2}}-\frac{1}{3} .\]  By (\ref{cardiod}), we see that the disk in (\ref{star3}) lies inside $\Omega_{c}$,  if $0 < r \leq R_{\mathcal{S}_{c}^{*}}$. The  result is sharp for the function $f_3$ defined in (\ref{extremal3}) (See Figure \ref{fig:pb3}.(\subref{fig3:sub2})). At $z:=\rho= R_{\mathcal{S}_{c}^{*}}$,
		\[\left|\frac{zf'(z)}{f(z)}\right|=\left|\frac{1-3\rho}{1-\rho^2}\right|=\frac{1}{3}=\Omega_c(-1)\in\partial \Omega_c(\mathbb{D}).\]
		\begin{figure}
			\centering
			\begin{subfigure}{.5\textwidth}
				\centering
				\includegraphics[height=3cm,keepaspectratio]{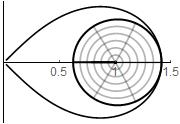}
				\caption{Sharpness of class $\mathcal{S}_{L}^{*}$}
				\label{fig3:sub1}
			\end{subfigure}%
			\begin{subfigure}{.5\textwidth}
				\centering
				\includegraphics[height=3cm,keepaspectratio]{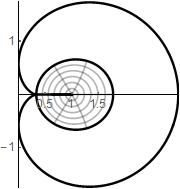}
				\caption{Sharpness of class $\mathcal{S}_{c}^{*}$}
				\label{fig3:sub2}
			\end{subfigure}
			\caption{Sharpness of starlike functions associated with lemniscate and cardioid.}
			\label{fig:pb3}
		\end{figure}
		
		\item
		For $0 < r \leq R_{\mathcal{S}_{sin}^{*}}$
		\[\frac{3r}{1-r^{2}} \leq  \sin 1-\frac{r^{2}}{1-r^{2}} .\] It is evident from (\ref{sine}) that the disk in (\ref{star3}) lies inside $\Omega_{s}$. For the function $f_{3}$ defined in (\ref{extremal3}), at $z:=-\rho=- R_{\mathcal{S}_{sin}^{*}}$,
		\[\left|\frac{zf'(z)}{f(z)}\right|=\left|\frac{1+3\rho}{1-\rho^2}\right|=1+\sin 1=q_0(1)\in\partial \Omega_s(\mathbb{D}).\]
		
		\item
		If $0 < r \leq R_{\mathcal{S}_{\leftmoon}^{*}}$, then
		\[\frac{3r-1}{1-r^{2}} \leq 1-\sqrt{2}.\]
		Thus, by \eqref{lune}, the disk in \eqref{star3} lies inside $\{w \in \mathbb{C}: \left | w^{2}-1 \right |< 2\left|w\right| \}$ and hence $ f \in \mathcal{S}_{\leftmoon}^{*}$.
		The sharpness follows for the function defined in (\ref{extremal3}) (See Figure \ref{fig:pb3a}.(\subref{fig3:sub3})).
		At $z:=\rho= R_{\mathcal{S}_{\leftmoon}^{*}}$, we have
		\[ \left| \left(\frac{zf_{3}'(z)}{f_{3}(z)}\right)^{2} - 1    \right|= \left| \left(\frac{1-3\rho}{1-\rho^{2}}\right)^{2} - 1    \right|=2\left| \frac{1-3\rho}{1-\rho^{2}}\right|=2\left|\frac{zf_{3}'(z)}{f_{3}(z)}\right|.\]
		
		\item
		If $0 < r \leq R_{\mathcal{S}_{R}^{*}}$, $2(\sqrt{2}-1) < a= 1/(1-r^{2}) \leq \sqrt{2}$ and
		\[\frac{3r-1}{1-r^{2}} \leq 2-2\sqrt{2},  \quad 0 < r \leq R_{\mathcal{S}_{R}^{*}}.\] Then, by \eqref{rational}, the disk in (\ref{star3}) lies inside $\psi(\mathbb{D})$. The result is sharp for the function defined in (\ref{extremal3}).  At $z:=\rho= R_{\mathcal{S}_{R}^{*}}$,
		\[\left|\frac{zf'(z)}{f(z)}\right|=\left|\frac{1-3\rho}{1-\rho^2}\right|=2(\sqrt{2}-1)=\psi(1)\in\partial \psi(\mathbb{D}).\]
		
		\item
		If $0 < r \leq R_{\mathcal{S}_{RL}^{*}}$, $ \sqrt{2}/3 \leq a= 1/(1-r^{2}) < \sqrt{2}$, and
		\begin{align*}
		9r^{2}- (1-r^{2})\sqrt{(1-r^{2})^{2}-((\sqrt{2}-\sqrt{2}r^{2})-1)^{2}}&\\ +         (1-r^{2})^{2}-((\sqrt{2}-\sqrt{2}r^{2})-1)^{2}&\leq 0.
		\end{align*}
		Then, by \eqref{reverse}, the disk in \eqref{star3} lies inside the region $\{w: | (w-\sqrt{2})^{2}-1 |< 1\}$. The result is sharp for the function defined in (\ref{extremal3}) (See Figure \ref{fig:pb3a}.(\subref{fig3:sub4})).
			\begin{figure}
			\centering
			\begin{subfigure}{.5\textwidth}
				\centering
				\includegraphics[height=3cm,keepaspectratio]{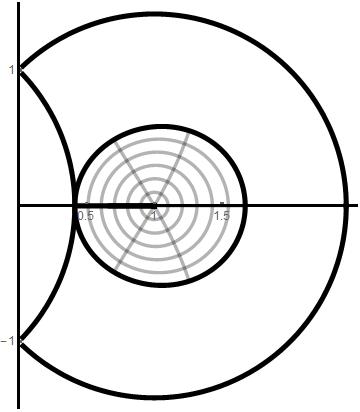}
				\caption{Sharpness of class $\mathcal{S}_{\leftmoon}^{*}$}
				\label{fig3:sub3}
			\end{subfigure}%
			\begin{subfigure}{.5\textwidth}
				\centering
				\includegraphics[height=3cm,keepaspectratio]{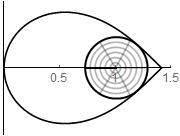}
				\caption{Sharpness of class $\mathcal{S}_{RL}^{*}$}
				\label{fig3:sub4}
			\end{subfigure}
			\caption{Sharpness of starlike functions associated with lune and reverse lemniscate.}
			\label{fig:pb3a}
		\end{figure}
		\item
		 If $0 < r \leq R_{\mathcal{S}_\gamma^{*}}$, $0 < \gamma \leq 1$, then
		$ r\leq (\sin(\pi\gamma)/2)/3$.
		It is evident from \eqref{strongstar} that the disk in (\ref{star3}) is contained in the sector $\left| \arg w \right| \leq (\pi\gamma)/2,  0<\gamma \leq 1$ if $0 < r \leq R_{\mathcal{S}_\gamma^{*}}$. \qedhere
		
	\end{enumerate}
\end{proof}


\begin{definition}
	Let $\mathcal{F}_{4}$ be the class of functions $f \in \mathcal{A}$ satisfying the inequality  \[ \operatorname{Re}\left (\frac{(1+z)^{2}}{z}f(z)\right )>0  \quad (z \in \mathbb{D}).\]
\end{definition}
The functions $ f_4: \mathbb{D} \longrightarrow \mathbb{C}$ defined by
\begin{equation}\label{extremal4}
 f_{4}(z)=\frac{z(1-z)}{(1+z)^{3}}.
\end{equation}
satisfy \[\operatorname{Re}\frac{(1+z)}{z}f_{4}(z)=\operatorname{Re}\frac{1-z}{1+z}>0\]
 and hence the function $f_4\in \mathcal{F}_{4}$. This proves that the class $ \mathcal{F}_{4}$ is non-empty. Also, the function $f_4$ is extremal function for the radii problem we consider.  Since
\[ f_4(z)=z-4 z^2+9 z^3-16 z^4+\dotsc, \]
the functions in  $ \mathcal{F}_{4}$ are not necessarily  univalent.
Since
\[ f_4'(z)=\frac{1-4z+z^2}{(1+z)^4}, \]  we have $f_4'(2-\sqrt{3})=0$ and
it follows by the first part of the following theorem,  that the radius of univalence of the functions in class $\mathcal{F}_{4}$ is $2-\sqrt{3} \approx 0.276949$.

\begin{theorem}
	For the class $\mathcal{F}_{4}$,  the following results hold:
	\begin{enumerate}[label=(\roman*)]
		
		\item The $\mathcal{S}^{*}(\alpha)$ radius
		$R_{\mathcal{S}^{*}(\alpha)}=(1-\alpha)/\left(2+\sqrt{3+\alpha^{2}}\right)$$, \quad 0 \leq \alpha< 1$.	
		\item The $\mathcal{S}_{L}^{*}$ radius
		$R_{\mathcal{S}_{L}^{*}}=(\sqrt{5}-2)/(1+\sqrt{2}) \approx 0.9778 $.
		
		\item The $\mathcal{S}_{p}$ radius
		$R_{\mathcal{S}_{p}}= (4-\sqrt{13})/3 \approx 0.1315$.
		
		\item The $\mathcal{S}_{e}^{*}$ radius
		$R_{\mathcal{S}_{e}^{*}}= (e-1)/\left(2e+\sqrt{3e^{2}+1 }\right)  \approx 0.1676$.
		
		\item The $\mathcal{S}_{c}^{*}$ radius
		$R_{\mathcal{S}_{c}^{*}}= (3 -\sqrt{7})/2 \approx 0.1771$.

		\item The $\mathcal{S}_{sin}^{*}$ radius
		$R_{\mathcal{S}_{sin}^{*}}= \sin 1/\left(2+ \sqrt{4+ \sin 1 (2+ \sin 1)}\right) \approx 0.1858$.
		
		\item The $\mathcal{S}_{\leftmoon}^{*}$ radius
		$R_{\mathcal{S}_{\leftmoon}^{*}}= \sqrt{2}-\sqrt{(3-\sqrt{2})} \approx 0.1549$.
		
		\item The $\mathcal{S}_{R}^{*}$ radius
		$R_{\mathcal{S}_{R}^{*}}= (3-2\sqrt{2})/\left(2+\sqrt{15-8\sqrt{2}} \right)\approx 0.0438$.
		
		\item The $\mathcal{S}_{RL}^{*}$ radius
		$R_{\mathcal{S}_{RL}^{*}}$ is the root $(\approx 0.0694)$ in $[0,1]$ of the equation
		\begin{align*} 16r^{2}+ (r^{2}-1)^2-(1-\sqrt{2}+(1+\sqrt{2})r^{2})^{2}+ &\\
		(r^2-1)
		\sqrt{2\sqrt{2}-2-2(1+\sqrt{2})r^{4}}= 0.
		\end{align*}
		
		\item The $\mathcal{S}_\gamma^{*}$ radius  $R_{S_\gamma^{*}} \geq (\sin(\pi\gamma/2))/\left(2+\sqrt{4-\sin^{2}(\pi\gamma/2)}\right), \quad 0 < \gamma \leq 1$.
		
	\end{enumerate}
\end{theorem}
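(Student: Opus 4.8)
The plan is to follow the uniform strategy of the three preceding theorems: reduce membership in $\mathcal{F}_4$ to a single Carath\'eodory function, determine a disk that contains the image of $|z|\le r$ under $zf'(z)/f(z)$, and then test this disk against each target region. First I would set $h(z)=(1+z)^2f(z)/z$, so that the defining inequality of $\mathcal{F}_4$ says precisely $h\in\mathcal{P}$ and $f(z)=zh(z)/(1+z)^2$. Logarithmic differentiation then yields
\[
\frac{zf'(z)}{f(z)}=\frac{zh'(z)}{h(z)}+\frac{2}{1+z}-1,
\]
the analogue of \eqref{main3}, with the square $(1+z)^2$ producing the term $2/(1+z)-1$ in place of $1/(1+z)$.

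The key step is the master disk estimate. Applying \eqref{disk} to $2/(1+z)$ shows that this term maps $|z|\le r$ into the disk with center $2/(1-r^2)-1=(1+r^2)/(1-r^2)$ and radius $2r/(1-r^2)$, while \eqref{shah} with $\alpha=0$ bounds $|zh'(z)/h(z)|$ by $2r/(1-r^2)$. Adding the two, I obtain
\[
\left|\frac{zf'(z)}{f(z)}-\frac{1+r^2}{1-r^2}\right|\le\frac{4r}{1-r^2},
\]
the counterpart of \eqref{star3}. The new feature relative to $\mathcal{F}_1$--$\mathcal{F}_3$ is that the center is now $(1+r^2)/(1-r^2)$ rather than $1/(1-r^2)$; I would record at the outset that this center lies in $[1,\sqrt{2})$ throughout $0<r\le 2-\sqrt{3}$, so that the hypotheses on the center required by \eqref{parabolic}, \eqref{exponential}, \eqref{cardiod}, \eqref{sine}, \eqref{lune}, \eqref{rational} and \eqref{reverse} all hold in the relevant range.

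With this disk in hand each part becomes a one-variable inequality. For $\mathcal{S}^*(\alpha)$ I would take real parts to get $\operatorname{Re}(zf'(z)/f(z))\ge(1-4r+r^2)/(1-r^2)\ge\alpha$, whose threshold is the root of $(1+\alpha)r^2-4r+(1-\alpha)=0$ in $[0,1]$; for $\mathcal{S}_L^*$ I would combine $|zf'(z)/f(z)-1|\le(4r+2r^2)/(1-r^2)$ with the two triangle inequalities exactly as in \eqref{connect3} and \eqref{tconnect3}; and for the parabolic, exponential, cardioid, sine, lune and rational classes I would demand that the disk radius $4r/(1-r^2)$ not exceed the admissible slack $a-c$ (respectively $\sin 1-|a-1|$ and $1-|\sqrt{2}-a|$) furnished by \eqref{parabolic}, \eqref{exponential}, \eqref{cardiod}, \eqref{sine}, \eqref{lune} and \eqref{rational}, with $a=(1+r^2)/(1-r^2)$. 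Each such inequality is a quadratic in $r$ whose smallest positive root is the stated radius. Sharpness in every case comes from the extremal $f_4$ of \eqref{extremal4}, for which a direct computation gives $zf_4'(z)/f_4(z)=(1-4z+z^2)/(1-z^2)$; evaluating at $z=\rho$, or at $z=-\rho$ for the classes whose extreme point lies to the right such as $\mathcal{S}_L^*$ and $\mathcal{S}_{sin}^*$, places $zf_4'(z)/f_4(z)$ exactly on the boundary of the corresponding region. For strong starlikeness I would invoke \eqref{strongstar}, requiring $4r/(1-r^2)\le a\sin(\pi\gamma/2)$, which simplifies to $r=\sin(\pi\gamma/2)/(2+\sqrt{4-\sin^2(\pi\gamma/2)})$ and yields the stated lower bound.

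The step I expect to be most delicate is the reverse-lemniscate case (ix). Here the slack $r_{RL}$ in \eqref{reverse} carries the nested radical $((1-(\sqrt{2}-a)^2)^{1/2}-(1-(\sqrt{2}-a)^2))^{1/2}$, so after squaring $4r/(1-r^2)\le r_{RL}$ and inserting $a=(1+r^2)/(1-r^2)$ I must clear the inner square root. The crucial simplification is the identity $(1-r^2)^2\bigl(1-(\sqrt{2}-a)^2\bigr)=2\sqrt{2}-2-2(1+\sqrt{2})r^4$, in which the $r^2$ terms cancel; this is what turns the estimate into the displayed quartic equation, and I would then have to confirm that its intended root lies in $[0,1]$ with $\sqrt{2}/3\le a<\sqrt{2}$ there. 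A secondary point to monitor, for the lune and rational classes, is that $a\le\sqrt{2}$ on all of $0<r\le R$, since \eqref{lune} and \eqref{rational} require the center not to exceed $\sqrt{2}$.
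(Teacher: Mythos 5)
Your proposal is correct and follows essentially the same route as the paper: the same factorization $f(z)=zh(z)/(1+z)^2$ with $h\in\mathcal{P}$, the same master disk $\left|zf'(z)/f(z)-(1+r^2)/(1-r^2)\right|\le 4r/(1-r^2)$ (your term $2/(1+z)-1$ is just $(1-z)/(1+z)$ as in \eqref{main4}), and the same lemma-by-lemma containment checks with sharpness from $f_4$. The only cosmetic difference is that you bound the center by $\sqrt{2}$ rather than recording the exact interval $[1,(4-2\sqrt{3})/(2\sqrt{3}-3)]$, which suffices for every lemma invoked.
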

\begin{proof}
	
	Let the function $ f \in \mathcal{F}_{4}$. Then
	\begin{equation} \label{g4}
	\operatorname{Re}\left (\frac{(1+z)^{2}}{z}f(z)\right )>0  \quad (z \in \mathbb{D}).
	\end{equation}
	Define the function $h:\mathbb{D} \longrightarrow \mathbb{C} $ by
	\begin{equation}\label{g6}
	h(z)=\frac{(1+z)^{2}}{z}f(z).
	\end{equation}
	By (\ref{g4}) and (\ref{g6}) we have $h \in \mathcal{P}$
	and $f(z)= z h(z)/(1+z)^{2} $.
	Therefore,  by calculation it can be shown that
	\begin{equation} \label{main4}
	\frac{zf'(z)}{f(z)}=\frac{z h^{'}(z)}{h(z)}+\frac{1-z}{1+z}.
	\end{equation}
	The bilinear transformation $(1-z)/(1+z)$ maps the disk $\left|z\right| \leq r$ onto the disk
	\begin{equation}\label{disk4}
	\left | \frac{1-z}{1+z}- \frac{1+r^{2}}{1-r^{2}}  \right | \leq \frac{2r}{1-r^{2}}.
	\end{equation}
	Using (\ref{disk4}) and (\ref{shah}),   it follows from (\ref{main4}) that $f$ maps the disk $\left|z\right| \leq r$ onto the disk
	\begin{equation} \label{star4}
	\left | \frac{zf'(z)}{f(z)} - \frac{1+r^{2}}{1-r^{2}}  \right |  \leq \frac{4r}{1-r^{2}}.
	\end{equation}
	The classes we discuss here are all subclasses of starlike functions. By \eqref{star4}, we have
	\begin{equation} \label{starlike4}
		\operatorname{Re} \frac{zf'(z)}{f(z)} \geq \frac{1-4r+r^{2}}{1-r^{2}} \geq 0, \quad (r \leq 2-\sqrt{3}).
	\end{equation}
	Hence all the radii that we estimate will be less than $2-\sqrt{3} \approx 0.26794$. Note that  for $0 < r \leq (2-\sqrt{3})$,  the centre of disk in (\ref{star4}) lies in the interval $[1,  (4-2\sqrt{3})/(2\sqrt{3}-3)] \approx [1, 1.1547]$.
	
	\begin{enumerate}[label=(\roman*)]
		
		\item
		The number $r=R_{\mathcal{S}^{*}(\alpha)} $ is the root of
		$  (1+\alpha) r^{2}-4r+1-\alpha=0$ in $[0,1]$ and hence, for $0<r\leq R_{\mathcal{S}^{*}(\alpha)}$, it follows
		by (\ref{starlike4}) that
		\[ 	\operatorname{Re} \frac{zf'(z)}{f(z)} \geq   \frac{1-4r+r^2}{1-r^{2}}\geq \alpha.\]
		For the function  $f_{4} \in \mathcal{F}_{4}$  given by \eqref{extremal4}  we have
		\[\frac{zf_{4}^{'}(z)}{f_{4}(z)}=\frac{1-4z+z^2}{1-z^{2}}=\frac{1-4r+r^2}{1-r^{2}}=\alpha,\quad (z=r= R_{\mathcal{S}^{*}(\alpha)})\]
		and this shows that the radius is sharp.

		\item	
		It   follows from (\ref{star4}) that
		\begin{equation} \label{estimate4}
		\left | \frac{zf'(z)}{f(z)} - 1  \right | \leq \left|\frac{zf'(z)}{f(z)}-\frac{1+r^2}{1-r^2}\right|+\frac{2r^2}{1-r^2}  \leq \frac{4r+ 2r^{2}}{1-r^{2}}.
		\end{equation} The number $r=R_{\mathcal{S}_{L}^{*}}$ is the root in $[0,1]$ of $(4r+ 2r^{2})\leq (\sqrt{2}-1)(1-r^{2})$ and for $0 < r \leq R_{\mathcal{S}_{L}^{*}}$,  we have
		\begin{equation} \label{lemniradius4}
		\frac{4r+ 2r^{2}}{1-r^{2}} \leq \sqrt{2}-1.
		\end{equation}
		Therefore,  by (\ref{estimate4}) and (\ref{lemniradius4}), for $0 < r \leq R_{\mathcal{S}_{L}^{*}}$, we have
		\begin{equation}\label{connect4}
		\left | \frac{zf'(z)}{f(z)} - 1  \right |  \leq \frac{4r+ 2r^{2}}{1-r^{2}} \leq \sqrt{2}-1.
		\end{equation}
		For $0 < r \leq R_{\mathcal{S}_{L}^{*}}$,  using triangle inequality and \eqref{connect4}, we have
		\begin{equation}\label{tconnect4}
			\left | \frac{zf'(z)}{f(z)} + 1  \right | \leq \sqrt{2}+1
		\end{equation}
		and hence by \eqref{connect4} and \eqref{tconnect4}
		\[ \left|\left(\frac{zf'(z)}{f(z)}\right)^{2} - 1\right|\leq \left|\frac{zf'(z)}{f(z)} + 1\right|\left|\frac{zf'(z)}{f(z)} - 1\right|\leq (\sqrt{2}+1)(\sqrt{2}-1)=1.\]
		The number  $\rho=  R_{\mathcal{S}_{L}^{*}}$ satisfies $(1+4\rho-\rho^{2})/(1-\rho^{2})= \sqrt{2}$. Using this, we see that at $z:= -\rho= -R_{\mathcal{S}_{L}^{*}}$, the  function $f_{4}$ defined in (\ref{extremal4}) satisfies
		\[ \left| \left(\frac{zf_{4}'(z)}{f_{4}(z)}\right)^{2} - 1    \right|=   \left| \left(\frac{1-4z+z^{2}}{1-z^{2}}\right )^{2}-1\right|= \left| \left(\frac{1+4\rho+\rho^{2}}{1-\rho^{2}}\right)^{2} - 1    \right|=1.\]
		This shows that the radius is sharp.
	
			\item
			If $0 < r \leq R_{\mathcal{S}_{p}}$,  then $a= 1/(1-r^{2}) \leq 3/2$ and
			\[\frac{4r}{1-r^{2}} \leq \frac{1+r^{2}}{1-r^{2}}-\frac{1}{2}.\] Thus,  by \eqref{parabolic},  we see that the disk in (\ref{star4}) lies inside the parabolic region $\Omega_{PAR}$. Sharpness follows for the function $f_{4}$ defined in (\ref{extremal4}) (See Figure \ref{fig:pb4}.(\subref{fig4:sub1})).
			At $z:=\rho= R_{\mathcal{S}_{p}}$, we have
			\[\operatorname{Re}\frac{zf'(z)}{f(z)}=\frac{1-4\rho+\rho^2}{1-\rho^2}=\frac{4\rho-2\rho^2}{1-\rho^2}=\left|\frac{2\rho^2-4\rho}{1-\rho^2}\right|=\left|\frac{zf'(z)}{f(z)}-1\right|.\]
			\begin{figure}
				\centering
				\begin{subfigure}{.5\textwidth}
					\centering
					\includegraphics[height=3cm,keepaspectratio]{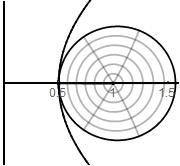}
					\caption{Sharpness of class $\mathcal{S}_{p}$}
					\label{fig4:sub1}
				\end{subfigure}%
				\begin{subfigure}{.5\textwidth}
					\centering
					\includegraphics[height=3cm,keepaspectratio]{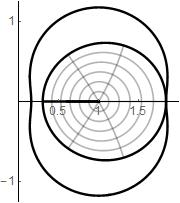}
					\caption{Sharpness of class $\mathcal{S}_{sin}^{*}$}
					\label{fig4:sub2}
				\end{subfigure}
				\caption{Sharpness of parabolic starlike functions and starlike functions associated with sine function}
				\label{fig:pb4}
			\end{figure}	 	
		
		\item
		For $0 < r \leq R_{\mathcal{S}_{e}^{*}}$, we have
		$1/e \leq  a= 1/(1-r^{2}) \leq (\ e+\ e^{-1})/2$ and
		\[\frac{4r}{1-r^{2}} \leq \frac{1+r^2}{1-r^{2}}-\frac{1}{e}.\]
		By  \eqref{exponential}, the disk in \eqref{star4}  lies inside
		$ \Omega_{e} $ for $0 < r \leq R_{\mathcal{S}_{e}^{*}}$ proving   that  the $\mathcal{S}_{e}^{*}$ radius for the class $\mathcal{F}_{4}$ is $R_{\mathcal{S}_{e}^{*}}$.
		The sharpness follows for the function $f_{4}$ defined in (\ref{extremal4}).
		Indeed at $z:=\rho= R_{\mathcal{S}_{e}^{*}}$, we have  \[ \left| \log \frac{zf_{4}'(z)}{f_{4}(z)} \right|= \left| \log \frac{1-4 \rho+\rho^2}{1-\rho^{2}}\right|=1.\]

		\item
		If $0 < r \leq R_{\mathcal{S}_{c}^{*}}$, then
		 	\[\frac{4r}{1-r^{2}} \leq  \frac{1+r^{2}}{1-r^{2}}-\frac{1}{3} .\] By (\ref{cardiod}), we see that the disk in (\ref{star4}) lies inside $\Omega_{c}$,  if $0 < r \leq R_{\mathcal{S}_{c}^{*}}$. The  result is sharp for the function $f_4$ defined in (\ref{extremal4}).  At $z:=\rho= R_{\mathcal{S}_{c}^{*}}$,
		 	\[\left|\frac{zf'(z)}{f(z)}\right|=\left|\frac{1-4\rho+\rho^2}{1-\rho^2}\right|=\frac{1}{3}=\Omega_c(-1)\in\partial \Omega_c(\mathbb{D}).\]

		\item
		For $0 < r \leq R_{\mathcal{S}_{sin}^{*}}$
		 	\[\frac{4r}{1-r^{2}} \leq \sin1-\frac{2r^{2}}{1-r^{2}} .\] It is evident from (\ref{sine}) that the disk in (\ref{star4}) lies inside $\Omega_{s}$ provided $0 < r \leq R_{\mathcal{S}_{sin}^{*}}$. For the function $f_{4}$ defined in (\ref{extremal4}) (See Figure \ref{fig:pb4}.(\subref{fig4:sub2})), at $z:=-\rho= -R_{\mathcal{S}_{sin}^{*}}$,
		 	\[\left|\frac{zf'(z)}{f(z)}\right|=\left|\frac{1+4\rho+\rho^2}{1-\rho^2}\right|=1+\sin 1=q_0(1)\in\partial \Omega_s(\mathbb{D}).\]

		\item
	If $0 < r \leq R_{\mathcal{S}_{\leftmoon}^{*}}$, then
	\[\frac{4r-1-r^{2}}{1-r^{2}} \leq 1-\sqrt{2}.\]
Thus, by \eqref{lune}, the disk in \eqref{star4} lies inside $\{w \in \mathbb{C}: \left | w^{2}-1 \right |< 2\left|w\right| \}$ and hence $f \in \mathcal{S}_{\leftmoon}^{*}$.
	The sharpness follows from the function defined in (\ref{extremal4}).
	At $z:=\rho= R_{\mathcal{S}_{\leftmoon}^{*}}$, we have
	\[ \left| \left(\frac{zf_{4}'(z)}{f_{4}(z)}\right)^{2} - 1    \right|= \left| \left(\frac{\rho^{2}-4\rho+1}{1-\rho^{2}}\right)^{2} - 1    \right|=2\left| \left(\frac{\rho^{2}-4\rho+1}{1-\rho^{2}}\right )\right|=2\left|\left(\frac{zf_{4}'(z)}{f_{4}(z)}\right)\right|.\]

		\item
		If $0 < r \leq R_{\mathcal{S}_{R}^{*}}$, $2(\sqrt{2}-1) < a= 1/(1-r^{2}) \leq \sqrt{2}$ and
		\[\frac{4r-1-r^{2}}{1-r^{2}} \leq 2-2\sqrt{2},  \quad 0 < r \leq R_{\mathcal{S}_{R}^{*}}.\] Then, by \eqref{rational}, the disk in (\ref{star4}) lies inside $\psi(\mathbb{D})$. The result is sharp for the function defined in (\ref{extremal4}) (See Figure \ref{fig:pb4a}.(\subref{fig4:sub3})).  At $z:=\rho= R_{\mathcal{S}_{R}^{*}}$,
		\[\left|\frac{zf'(z)}{f(z)}\right|=\left|\frac{1-4\rho+\rho^2}{1-\rho^2}\right|=2(\sqrt{2}-1)=\psi(1)\in\partial \psi(\mathbb{D}).\]

		\item
		If $0 < r \leq R_{\mathcal{S}_{RL}^{*}}$, $ \sqrt{2}/3 \leq a= 1/(1-r^{2}) < \sqrt{2}$, and
			\begin{align*} 16r^{2} -(1-r^{2})\sqrt{(1-r^{2})^{2}-((\sqrt{2}-\sqrt{2}r^{2})-(1+r^{2})^{2}}&\\
		 +         (1-r^{2})^{2}-((\sqrt{2}-\sqrt{2}r^{2})-(1+r^{2})^{2}&\leq 0.
		\end{align*}
		Then, by \eqref{reverse}, the disk in \eqref{star4} lies inside the region $\{w: | (w-\sqrt{2})^{2}-1 |< 1\}$. The result is sharp for the function defined in (\ref{extremal4}) (See Figure \ref{fig:pb4a}.(\subref{fig4:sub4})).
			\begin{figure}
			\centering
			\begin{subfigure}{.5\textwidth}
				\centering
				\includegraphics[height=3cm,keepaspectratio]{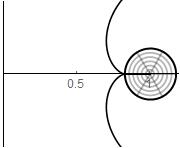}
				\caption{Sharpness of class $\mathcal{S}_{R}^{*}$}
				\label{fig4:sub3}
			\end{subfigure}%
			\begin{subfigure}{.5\textwidth}
				\centering
				\includegraphics[height=3cm,keepaspectratio]{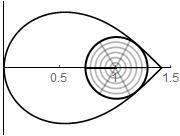}
				\caption{Sharpness of class $\mathcal{S}_{RL}^{*}$}
				\label{fig4:sub4}
			\end{subfigure}
			\caption{Sharpness of starlike functions associated with a rational function and reverse lemniscate.}
			\label{fig:pb4a}
		\end{figure}	 	
		\item
		If $0 < r \leq R_{\mathcal{S}_\gamma^{*}}$, $0 < \gamma \leq 1$, then
$ r^{2}\sin(\pi\gamma/2)-4r+\sin(\pi\gamma)/2)\leq 0$.
		It is evident from \eqref{strongstar} that the disk (\ref{star4}) is contained in the sector $\left| \arg w \right| \leq (\pi\gamma)/2, $ if $0 < r \leq R_{\mathcal{S}_\gamma^{*}}$. \qedhere

	\end{enumerate}
\end{proof}


\end{document}